\renewcommand{\subset}{\subseteq}
\newcommand{\End}{\mathrm{End}}
\newcommand{\dd}{\textup{d}}
\newcommand{\mc}{\mathcal}
\newcommand{\mf}{\mathfrak}
\newcommand{\mb}{\mathbb}
\newcommand{\opnorm}{\@ifstar\@opnorms\@opnorm}
\pgfplotsset{compat=1.10}
\newcommand{\Addresses}{{
\bigskip
\footnotesize

\textsc{Max Planck Institute for Mathematics, Vivatsgasse 7, 53111 Bonn, Germany} \par \nopagebreak
    \textit{E-mail address}:\href{mailto:ksingh@mpim-bonn.mpg.de}{ksingh@mpim-bonn.mpg.de}
}}
\theoremstyle{plain}
\newtheorem{thm}{Theorem}[section]
\newtheorem*{thm*}{Theorem}
\newtheorem{lem}[thm]{Lemma}
\theoremstyle{definition}
\newtheorem{defn}[thm]{Definition} 
\newtheorem{exmp}[thm]{Example}
\newtheorem{rmk}[thm]{Remark}
\newtheorem*{conv*}{Convention}
\newtheorem*{prob*}{Problem}
\newtheorem{assumptions}[thm]{Assumptions}
\title{A unified approach to some rigidity and stability problems in algebra}
\author{Karandeep J. Singh }
\date{\null}
\begin{document}

\maketitle
\begin{abstract}
\end{abstract}
In this note, we use give some algebraic applications of a  previous result by the author which compares the deformations parameterized by the Maurer-Cartan elements of a differential graded Lie algebra, and a differential graded Lie subalgebra: It gives a criterion for the map on the space of Maurer-Cartan elements up to gauge equivalence, induced by the inclusion of the subalgebra, to be locally surjective. By making appropriate choices for the differential graded Lie algebra and a differential graded Lie subalgebra, we recover some classical results in the deformation theory of finite-dimensional Lie and associative algebras. We consider rigidity of Lie and associative algebras, Lie algebra morphisms, and give a quick proof of the fact that the deformation theories of a unital associative algebra as a unital associative algebra and as an associative algebra are equivalent. We then turn to stability of Lie subalgebras and their morphisms under deformations of the ambient Lie algebra structure, and study when the compatibility with a geometric structure of a representation of a Lie algebra on a vector space is stable under deformations of the representation.
\tableofcontents
\section{Introduction}
Throughout mathematics, deformation problems appear in various forms (see e.g. \cite{KodairaSpencer,gerstenhaber,nijenhuislie}). A principle postulated by Deligne \cite{delignedgla} states that deformation problems are often encoded in an algebraic structure called a \emph{differential graded Lie algebra}: the structures of interest are parameterized by special elements called \emph{Maurer-Cartan elements}, while isomorphisms of structures are captured by an equivalence relation on the Maurer-Cartan elements, which can be described intrinsic to the differential graded Lie algebra. As such, studying algebraic properties of differential graded Lie algebras can lead to insights to the deformation problem they encode.

An important example of this phenomenon is the relation between cohomological properties of the differential graded Lie algebras and local properties of the space of structures of interest. The goal of this note is to provide some algebraic applications of the author's result \cite{KarandeepStability} which provides such a relation. These applications recover several classical results in the deformation theory of Lie and associative algebras, and allow for the result to be applied with less technicalities than the geometric applications in \cite{KarandeepStability}. We proved the following result, which we state here in simplified form.
\begin{thm}\label{thm:mainthmbas}
Let $(\mf g, \partial,[-,-])$ be a differential graded Lie algebra, and let $\mf h \subset \mf g$ be a differential graded Lie subalgebra of finite codimension. Let $Q\in \mf h^1$ be a Maurer-Cartan element of $\mf h$, and assume that 
$$
H^1(\mf g/\mf h, \overline{\partial + [Q,-]}) = 0.
$$
Then every Maurer-Cartan element $Q'\in \mf g^1$ near $Q$ is equivalent to an element of $\mf h^1$.
\end{thm}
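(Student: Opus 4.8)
The plan is to run a Newton-type iteration that gauges the transverse component of $Q'$ into $\mf h$, using the vanishing of $H^1(\mf g/\mf h,\overline{\partial+[Q,-]})$ to solve the linearized problem at each step. Write $\partial_Q := \partial + [Q,-]$; since $Q$ is Maurer--Cartan we have $\partial_Q^2 = 0$, and since $\mf h$ is a subalgebra containing $Q$, the operator $\partial_Q$ preserves $\mf h$ and descends to the differential $\overline{\partial_Q}$ on $\mf g/\mf h$. First I would substitute $Q' = Q + q$ into the Maurer--Cartan equation; using that $Q$ is itself Maurer--Cartan, this becomes $\partial_Q q + \tfrac12[q,q] = 0$. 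Writing $\pi\colon \mf g \to \mf g/\mf h$ for the projection and applying it yields $\overline{\partial_Q}\,\pi(q) = -\tfrac12\,\pi([q,q])$, which says that $\pi(q)$ is $\overline{\partial_Q}$-closed up to a quadratic error. The goal is to produce $\xi \in \mf g^0$ with $\pi(e^{\xi}\cdot Q') = 0$.

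Next I would extract the linear algebra supplied by the hypothesis. Because $\mf g/\mf h$ is finite dimensional, $H^1(\mf g/\mf h,\overline{\partial_Q}) = 0$ means $\ker(\overline{\partial_Q}\colon (\mf g/\mf h)^1 \to (\mf g/\mf h)^2) = \im(\overline{\partial_Q}\colon (\mf g/\mf h)^0 \to (\mf g/\mf h)^1) =: B^1$, and the surjection of $\overline{\partial_Q}$ onto $B^1$ admits a linear right inverse $h\colon B^1 \to (\mf g/\mf h)^0$ with $\overline{\partial_Q}\circ h = \mathrm{id}_{B^1}$; fix also a linear section $s\colon \mf g/\mf h \to \mf g$ of $\pi$. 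One correction step then consists of decomposing $\pi(q) = \pi(q)_B + \pi(q)_C$ along $B^1$ and a fixed complement $C$, setting $\xi := -\,s\,h(\pi(q)_B) \in \mf g^0$, and replacing $Q'$ by $e^{\xi}\cdot Q'$. Since the infinitesimal gauge action at $Q'$ involves $\partial_{Q'} = \partial_Q + [q,-]$, the linearization of this replacement in the $\mf g/\mf h$ direction is $\pi(q) + \overline{\partial_Q}\,\pi(\xi) = \pi(q)_C$ up to terms quadratic in the small quantities, so the step cancels the closed part $\pi(q)_B$ exactly at first order.

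The crucial estimate is that the remaining component $\pi(q)_C$ is itself small of the right order. Decompose $q = a + b$ with $b := s(\pi(q))$ the transverse part and $a := q - b \in \mf h$, so that $|b| \sim |\pi(q)|$. Since $\mf h$ is a subalgebra, $\pi([a,a]) = 0$, whence $\pi([q,q]) = 2\pi([a,b]) + \pi([b,b])$ is bounded by $C\,(|q|\,|\pi(q)| + |\pi(q)|^2)$. Through the identity $\overline{\partial_Q}\,\pi(q) = -\tfrac12\pi([q,q])$ and the injectivity of $\overline{\partial_Q}$ on $C$, this forces $|\pi(q)_C| \le C'\,\delta\,|\pi(q)|$ where $\delta := |q|$. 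Combined with the quadratic gauge errors, which are likewise $O(\delta\,|\pi(q)|)$, each step multiplies $|\pi(\,\cdot\,)|$ by a factor $\le C''\delta$; choosing $Q'$ close enough to $Q$ that $C''\delta < 1$ makes the iteration a contraction. The transverse parts then decay geometrically while the accumulated gauge parameters stay summable, so the composite gauge transformations converge to one carrying $Q'$ to a limit $Q_\infty$ with $\pi(Q_\infty) = 0$, i.e.\ $Q_\infty \in \mf h^1$.

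I expect the main obstacle to be precisely this interplay between the two components of $\pi(q)$. The linearized gauge map only surjects onto the closed directions $B^1 = \ker\overline{\partial_Q}$ in degree $1$, so a direct implicit function theorem cannot reach an arbitrary target in $\mf g/\mf h$; the non-closed directions in $C$ are unreachable to first order and must instead be controlled through the Maurer--Cartan equation, which is exactly what makes the iteration converge rather than stall. Carrying this out rigorously requires fixing a norm --- here the finite codimension is essential, guaranteeing both that $\mf g/\mf h$ is finite dimensional so that $h$ and $s$ are bounded and all norms equivalent, and that the quadratic remainder and gauge-exponential errors can be estimated uniformly --- and then verifying convergence of the infinite composition of gauge transformations.
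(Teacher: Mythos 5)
Your plan is essentially sound and it isolates exactly the right mechanism: the gauge action can only reach the closed directions $B^1=\im\bigl(\overline{\partial_Q}\colon(\mf g/\mf h)^0\to(\mf g/\mf h)^1\bigr)$, while the complementary directions must be controlled through the Maurer--Cartan equation via the identity $\overline{\partial_Q}\,\pi(q)=-\tfrac12\pi([q,q])$ together with $\pi([a,a])=0$ for $a\in\mf h$. Note first that this paper does not actually prove the statement; it is imported from \cite{KarandeepStability} (Theorem 3.20 there, restated here as Theorem \ref{thm:mainthm} under Assumptions \ref{ass:mainthm}), so the comparison is with that argument. The route there is shorter than yours: one applies the implicit function theorem once, to the finite-dimensional map $v\mapsto p_{B}\bigl(\pi((Q')^{\sigma_0(v)})\bigr)$ from a neighborhood of $0\in\mf g^0/\mf h^0$ to $B^1$, whose derivative at $(0,Q)$ is $\overline{\partial_Q}$ and hence surjective onto $B^1$ by construction; this produces a single gauge parameter $v$ (in fact a family parametrized by $\ker(\overline{\partial_Q}|_{\deg 0})$, which is how the refined conclusion of Theorem \ref{thm:mainthm} arises) killing the $B^1$-component. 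Your own estimate then finishes in one stroke rather than feeding an iteration: once $\pi(q)_B=0$ one has $c\,|\pi(q)_C|\le\tfrac12|\pi([q,q])|\le C\,|q|\,|\pi(q)_C|$ with $c>0$ the lower bound of $\overline{\partial_Q}$ on the complement $C$, so for $|q|$ small this forces $\pi(q)_C=0$ exactly, not merely $O(\delta|\pi(q)|)$. You already have every ingredient of this; assembling them as a Newton iteration is correct but costs you two things. First, you must control an infinite composition of gauge transformations, and the equivalence in Definition \ref{def:gaugeeq} is generated by \emph{finite} concatenations of flows; in the degreewise finite-dimensional case this is repaired by noting that the partial products converge in the group $G$ integrating $\mf g^0$ and that the affine action of the limit is still a gauge equivalence, but you should say this, and it becomes genuinely delicate when only $\mf g/\mf h$ (not $\mf g$ itself) is finite-dimensional. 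Second, your estimates tacitly assume norms on $\mf g^1,\mf g^2$ and boundedness of $\partial$ and $[-,-]$, which is precisely the content of Assumptions \ref{ass:mainthm}a)--c); the simplified statement suppresses these hypotheses, and the one-shot argument needs them only through the continuity and smoothness of $(v,Q')\mapsto(Q')^{\sigma_0(v)}$ mod $\mf h^1$, i.e.\ in a single finite-dimensional direction.
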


This theorem should be thought of as a statement about infinitesimal stability implying local stability: as $H^1(\mf h, \overline{\partial + [Q,-]})$ and $H^1(\mf g,\overline{\partial + [Q,-]})$ can be viewed as the tangent spaces to the spaces of Maurer-Cartan elements up to equivalence of $\mf h$ and $\mf g$ at the class of $[Q]$ respectively, the hypothesis implies that the map
$$
H^1(\mf h,\overline{\partial + [Q,-]}) \to H^1(\mf g, \overline{\partial + [Q,-]})
$$
is surjective, while the conclusion states that the induced map on Maurer-Cartan elements up to equivalence is locally surjective around $Q$. When the subalgebra $\mf h$ is defined by a linear property $P$, Theorem \ref{thm:mainthm} can be interpreted as a criterion for when every object near $Q$ has the property $P$ up to equivalence. We will further clarify this idea by studying some examples in the deformation theory of Lie and associative algebras.

A geometric approach to pass from cohomological to local properties for deformation problems in Lie theory was given in \cite{crainic2013survey}: and Theorem \ref{thm:lierigid}, Theorem \ref{thm:rigmorph}, Theorem \ref{thm:stabsubalg} and a weaker version of Theorem \ref{thm:stabmorph2} were also proven there.
\subsection*{Outline of the paper}
The outline of the paper is as follows:
\begin{itemize}
    \item[-] In Section \ref{sec:defthydgla}, we recall the basics of differential graded Lie algebras and their role in deformation theory, and state the full version of Theorem \ref{thm:mainthmbas} in Section \ref{sssec:mainthm}   (Theorem \ref{thm:mainthm}).
    \item[-] In Section \ref{sec:applications}, we give some applications of Theorem \ref{thm:mainthm}. 
    \begin{itemize}
        \item[-] In Section \ref{sssec:lierigid}, we recover the rigidity result of \cite{nijenhuislie} for finite-dimensional Lie algebras (Theorem \ref{thm:lierigid}).
        \item[-] In Section \ref{sssec:assrigid}, we recover the rigidity result of \cite{gerstenhaber} for finite-dimensional associative algebras (Theorem \ref{thm:assrigid}).
        \item[-] In Section \ref{sssec:unitvnon}, we use Theorem \ref{thm:mainthm} to give a quick proof of the well-known result which states that the deformation theories of a unital associative algebra as a unital associative algebra and as an associative algebra are equivalent (Theorem \ref{thm:unitvsnon}).
        \item[-] In Section \ref{sec:rigmorphism}, we recover the rigidity result of \cite{cohomgradlie} for morphisms of finite-dimensional Lie algebras (Theorem \ref{thm:rigmorph}). 
        \item[-] In Section \ref{ssec:stabsubalg}, we recover the stability result for Lie subalgebras of a given Lie algebra \cite{stabsubalgmorph} (Theorem \ref{thm:stabsubalg}).
        \item[-] In Section \ref{ssec:stabmorph}, we give two approaches to obtain results on the stability of morphisms of Lie algebras (Theorem \ref{thm:stabmorph1} and Theorem \ref{thm:stabmorph2}), the latter of which agrees with \cite{stabsubalgmorph}, but our approach guarantees stability under a larger class of deformations.
        \item[-] In Section \ref{ssec:mapintosubalg}, we apply Theorem \ref{thm:mainthm} to obtain a criterion for when the property of a Lie algebra morphism mapping into a fixed subalgebra of the codomain is stable (Theorem \ref{thm:morphsubalgconcr}). As applications, we give criteria for the the compatibility of a representation of a Lie algebra with a geometric structure (Example \ref{ex:geostr}).
    \end{itemize}
    
\end{itemize}
\paragraph{\bf Acknowledgements} We would like to thank Marco Zambon for fruitful discussions and helpful comments. We would like to thank the Max Planck Institute for Mathematics for its hospitality and financial support, where a part of this work was done. We also thank KU Leuven, and acknowledge the FWO and FNRS under EOS projects G0H4518N and G0I2222N. 
\section{Differential graded Lie algebras, stability and rigidity}\label{sec:defthydgla}
In this section, we give some background on differential graded Lie algebras and a way they are used in deformation theory. 
\subsection{Differential graded Lie algebras}
\begin{defn}\label{def:dgla}
    A \emph{differential graded Lie algebra} is a triple $(\mf g,\partial, [-,-])$, where
    \begin{itemize}
        \item [i)] $\mf g = \bigoplus_{i\in \mb Z} \mf g^i$ is a graded vector space,
        \item [ii)] $\partial:\mf g\to \mf g$ is map of degree 1,
        \item [iii)] $[-,-]:\mf g \times \mf g \to \mf g$ is a graded skew-symmetric map, 
    \end{itemize}
    such that
    \begin{itemize}
        \item [a)] $\partial^2 = 0$,
        \item [b)] for all $x, y\in \mf g$ homogeneous, $$\partial[x,y] = [\partial x,y] +(-1)^{|x|} [x,\partial y],$$
        \item [c)] for all $x,y,z \in \mf g$ homogeneous, $$[[x,y],z] = [x,[y,z]] - (-1)^{|x||y|}[y,[x,z]].$$
    \end{itemize}
\end{defn}
It was postulated by Deligne \cite{delignedgla} that deformation problems are governed by differential graded Lie algebras, or more generally, $L_\infty$-algebras. The main idea is that equations that are satisfied by structures, can often be realized as an equation involving the structure maps of a differential graded Lie algebra called the \emph{Maurer-Cartan equation}.
\begin{defn}
    Let $(\mf g, \partial, [-,-])$ be a differential graded Lie algebra, and let $Q\in \mf g^1$. Then $Q$ is a \emph{Maurer-Cartan} element if 
    \begin{equation}\label{eq:MC}
        \partial(Q) + \frac{1}{2}[Q,Q] = 0.
    \end{equation}
\end{defn}
Note that $\mf g^0$ is an ordinary Lie algebra. Moreover, there is a natural action of $\mf g^ 0$ on $\mf g^1$ by symmetries, which encode equivalences of deformations (see e.g. \cite{mandef2}). To define it, we need to equip $\mf g^1$ with a topology. As $\mf g^ 0$ and $\mf g^1$ will be finite-dimensional in all applications of this paper, we will restrict to this case.
\begin{defn}\label{def:gaugeeq}
    Let $X\in \mf g^0$, $Q\in \mf g^1$. Define $Q^X$ as the time-1 solution of the initial value problem
    \begin{equation}\label{eq:gaugeeq}
        \frac{d}{dt}Q_t = \partial(X) - [X,Q_t], \,\,\, Q_0 = Q.
    \end{equation}
\end{defn}
\begin{rmk}
    The relation $Q\sim Q^X$ generates an equivalence relation on $\mf g^1$: $Q$ is said to be \emph{gauge equivalent} to $Q'$ if they can be connected by a finite concatenation of paths as in \eqref{eq:gaugeeq}.
\end{rmk}
When $\mf g$ is degreewise finite-dimensional, we can give an explicit expression for $Q^X$. First observe that due to property c) in Definition \ref{def:dgla}, 
$$
[-,-]:\mf g^0\times \mf g^1 \to \mf g^1
$$
makes $\mf g^1$ into an $\mf g^0$-module.
\begin{lem}\label{lem:gaugeactionexpl} Let $Q\in \mf g^1$, $X\in \mf g^0$. Let $G$ be the 1-connected Lie group integrating $\mf g^0$ and let $\exp:\mf g^0 \to G$ be the exponential map. Denote by $\rho:G\to GL(\mf g^1)$ the induced $G$-representation on $\mf g^1$. Then the solution to the initial value problem \eqref{eq:gaugeeq} is given by
$$
Q_t = \rho(\exp(-tX))(Q) + \int_{0}^t \rho(\exp(-sX))(\partial X) \dd s.
$$    
Consequently, 
\begin{equation}\label{eq:gaugeaction}
    Q^X = \rho(\exp(-X))(Q) + \int_0^1 \rho(\exp(-sX))(\partial X)\dd s.
\end{equation}
\end{lem}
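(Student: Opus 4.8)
The plan is to recognize the initial value problem \eqref{eq:gaugeeq} as a linear inhomogeneous ordinary differential equation on the finite-dimensional vector space $\mf g^1$ and to solve it by the method of variation of constants. Writing $\ell := [X,-]\colon \mf g^1 \to \mf g^1$ for the linear operator encoding the $\mf g^0$-action of $X$, and noting that $\partial X \in \mf g^1$ is a constant vector, the equation reads $\frac{d}{dt}Q_t = \partial X - \ell(Q_t)$ with $Q_0 = Q$. Since $\mf g^1$ is finite-dimensional this has a unique solution, so it suffices to produce one formula and verify it.

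The key identification is that the flow of the homogeneous part $\frac{d}{dt}Q_t = -\ell(Q_t)$ is $\exp(-t\ell)$, and that this coincides with $\rho(\exp(-tX))$. Indeed, $\rho$ is by definition the Lie group representation integrating the Lie algebra representation $\mf g^0 \to \gl(\mf g^1)$, $X \mapsto \ell = [X,-]$, so its differential sends $X$ to $\ell$; the standard compatibility $\rho(\exp(Y)) = \exp(d\rho(Y))$ between a representation and its differential then gives $\rho(\exp(-tX)) = \exp(-t\ell)$. In particular $\frac{d}{dt}\rho(\exp(tX)) = \ell \circ \rho(\exp(tX))$, and $\rho(\exp(tX))$ commutes with $\ell$. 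This is the step I expect to require the most care: keeping the sign conventions straight and justifying that the integrated action evaluated at $\exp(-tX)$ is literally the exponential of the infinitesimal operator $-[X,-]$, which is exactly where finite-dimensionality of $\mf g^0$ and $\mf g^1$ is used.

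Granting this, I would apply the integrating factor $\rho(\exp(tX)) = \exp(t\ell)$: setting $P_t := \rho(\exp(tX))(Q_t)$ and differentiating, the two terms involving $\ell$ cancel by the commutation just noted, leaving $\frac{d}{dt}P_t = \rho(\exp(tX))(\partial X)$. Integrating from $0$ to $t$ and then applying $\rho(\exp(-tX))$ gives
$$
Q_t = \rho(\exp(-tX))(Q) + \int_0^t \rho(\exp((s-t)X))(\partial X)\,\dd s.
$$
Finally, since $\partial X$ is constant I would substitute $u = t-s$ in the integral to rewrite $\rho(\exp((s-t)X))$ as $\rho(\exp(-uX))$, turning the integral into $\int_0^t \rho(\exp(-sX))(\partial X)\,\dd s$ and yielding the claimed formula; evaluating at $t=1$ then gives \eqref{eq:gaugeaction}. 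Alternatively one can bypass the derivation and verify the stated formula directly: differentiating it, using $\frac{d}{ds}\rho(\exp(-sX))(\partial X) = -[X,\rho(\exp(-sX))(\partial X)]$ together with the fundamental theorem of calculus recovers $\partial X - [X,Q_t]$, while the initial condition $Q_0 = Q$ is immediate.
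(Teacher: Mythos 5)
Your proof is correct; the paper in fact states this lemma without giving a proof, and your argument — recognizing \eqref{eq:gaugeeq} as a linear inhomogeneous ODE, identifying $\rho(\exp(-tX))$ with $\exp(-t[X,-])$ on $\mf g^1$, and solving by variation of constants (with the substitution $u=t-s$ to put the integral in the stated form) — is the standard and intended one. The signs and the use of finite-dimensionality are all handled correctly, and your alternative direct verification also checks out.
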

\begin{rmk} Observe that due to the linearity of \eqref{eq:gaugeeq} in $X$, we have $Q_t = Q^{tX}$ for all $t \in \mb R$. That is, the time-$t$ solution associated to $X$ is the time-$1$ solution associated to $tX$.
\end{rmk}
\subsection{Differential graded Lie algebras in deformation theory}
\subsubsection{Rigidity}
In mathematics, rigidity questions arise naturally:
\begin{equation}\label{qrig}\text{Given a structure $Q_0$, when is every nearby structure $Q$ isomorphic to $Q_0$?}\end{equation}
Without any additional information, this is not a well-defined question, as the notions of closeness and isomorphisms are not defined. However, in many examples there is a natural meaning, which is captured in a differential graded Lie algebra $(\mf g, \partial, [-,-])$. The idea is that in this differential graded Lie algebra, we have the correspondences 
\begin{align*}
    \{\text{Structures near $Q_0$}\} &\longleftrightarrow  \{\text{Maurer-Cartan elements of $(\mf g, \partial, [-,-])$ near $0$}\},\\
    \{\text{Isomorphisms of structures}\} &\longleftrightarrow \{\text{Gauge equivalences}\}.
\end{align*}
Then the rigidity question is equivalent to: 
\begin{center}
    When is every Maurer-Cartan element near $0$ gauge equivalent to $0$?
\end{center}
Concretely, we are interested in when the map
$$
\text{ev}_0:\mf g^0 \to \mf g^1,\,\,\, X \mapsto 0^X
$$
is locally surjective onto a neighborhood of the zero set of the map
$$
MC:\mf g^1 \to \mf g^2, \,\,\, Q \mapsto \partial(Q) + \frac{1}{2}[Q,Q].
$$
We turn to the infinitesimal side, and analyse when surjectivity holds at the level of tangent spaces, which we define as if the zero set of $MC$ is a smooth manifold. Differentiating $\text{ev}_0$ at $0\in \mf g^0$, we find
$$
d(\text{ev}_0)_0(v) = \left.\frac{d}{dt}\right|_{t = 0} 0^{tv} = \partial(v) - [v,Q_0] = \partial(v).
$$
Moreover, the derivative of $MC$ at $0\in \mf g^1$ is $\partial: \mf g^1 \to \mf g^2$. Consequently, $0$ is ``infinitesimally rigid'' if $\partial(\mf g^0) = \ker(\partial:\mf g^1\to \mf g^2)$. In other words, if
$$
H^1(\mf g,\partial) = 0.
$$
It turns out that this condition is sufficient for rigidity if $\mf g$ is degreewise finite-dimensional. Note that $H^1(\mf g,\partial)$ can be interpreted as the ``tangent space'' to $[0]\in \frac{MC(\mf g,\partial,[-,-])}{\sim_{\mf g}}$. For the ``tangent space'' to the class of a general $Q\in MC(\mf g,\partial,[-,-])$, we similarly obtain $H^1(\mf g, \partial + [Q,-])$.
\subsubsection{Stability}\label{sec:stability}
Rigidity is quite a strong property, and often fails. In this subsection, we explain the weaker notion of stability of a property. Geometrically, rigidity of a Maurer-Cartan element $Q$ means that the inclusion of a point
$$
\{\ast\} \hookrightarrow \frac{MC(\mf g, \partial,[-,-])}{\sim}, \,\,\, \ast \mapsto [Q]
$$
is locally surjective. It is natural to consider the inclusion of other nice subsets of $\frac{MC(\mf g,\partial,[-,-])}{\sim}$, and study when the inclusion is locally surjective. If the Maurer-Cartan elements of $(\mf g,\partial, [-,-])$ correspond to certain algebraic or geometric structures, it is natural from a deformation theoretic perspective to consider subsets defined by imposing a property on the Maurer-Cartan elements and considering the orbits of the objects with the property. It turns out that the properties can often be encoded inside a differential graded Lie subalgebra $\mf h \subset \mf g$. We then have the correspondences:
\begin{align*}
    \{\text{Structures near $Q_0$}\} &\longleftrightarrow  \{\text{Maurer-Cartan elements of $(\mf g, \partial, [-,-])$ near $0$}\},\\
    \{\text{Structures near $Q_0$ with property $P$}\} & \longleftrightarrow \{\text{Maurer-Cartan elements of $\mf h\subset \mf g$ near $0$}\},\\
    \{\text{Isomorphisms of structures}\} &\longleftrightarrow \{\text{Gauge equivalences}\}.
\end{align*}

Then the subset of structures with property $P$ up to $\mf g$-equivalence can be realized as the image of the natural map
\begin{equation}\label{eq:locsurjstab}
\frac{MC(\mf h,\partial,[-,-])}{\sim_{\mf h}} \to \frac{MC(\mf g, \partial, [-,-])}{\sim_{\mf g}},
\end{equation}
where $\mf h \subset \mf g$ is a differential graded Lie subalgebra, and $\sim_{\mf h}$ is the induced equivalence on $\mf h^1$. Note however, that the map need not be injective, as in general, more things are identified on the right hand side. We again consider the infinitesimal version. As in the previous section, local surjectivity around $0 \in \mf g$ at the infinitesimal level means that the induced map
\begin{equation}\label{eq:infsurjstab}
H^1(\mf h,\partial) \to H^1(\mf g,\partial)
\end{equation}
is surjective. By the long exact sequence in cohomology associated to the short exact sequence $$\mf h \hookrightarrow \mf g\twoheadrightarrow \mf g/\mf h,$$ the surjectivity of \eqref{eq:infsurjstab} is equivalent to the vanishing of the map
$$
H^1(\mf g,\partial) \to H^1(\mf g/\mf h,\overline{\partial }),
$$
where $\overline{\partial}$ is the induced differential on the quotient $\mf g/\mf h$. It turns out that the stronger condition $H^1(\mf g/\mf h,\overline{\partial}) = 0$ implies that the map \eqref{eq:locsurjstab} is locally surjective, which we discuss in the next section.
\begin{rmk} \label{eq:gaugevsiso}
    Often the gauge equivalences are not in bijection with the isomorphisms, but only correspond to a subset of isomorphisms, as we will see in examples. This has two sides: while vanishing of the relevant cohomology group gives more information than the existence of some isomorphism, it does not necessarily see all isomorphisms. A structure may therefore be rigid, while the rigidity is not seen by the differential graded Lie algebra.
\end{rmk}
\subsubsection{A stability result in terms of differential graded Lie algebras}\label{sssec:mainthm}
In this subsection, we recall \cite[Theorem 3.20]{KarandeepStability}, and show that in the case of degreewise finite-dimensional differential graded Lie algebras, all technical assumptions are satisfied. 

We summarise the data and assumptions needed for \cite[Theorem 3.20]{KarandeepStability}.
\begin{assumptions} \label{ass:mainthm} Assume that we have the following:
    \begin{itemize}
        \item[i)] A differential graded Lie algebra $(\mf g, \partial, [-,-])$ such that $\mf g^i$ is a locally convex vector space for $i = 0,1,2$,
        \item[ii)] a differential graded Lie subalgebra $\mf h \subset \mf g$ of degree-wise finite codimension, for which $\mf h^i \subset \mf g^i$ is a closed subspace,
        \item[iii)] splittings $\sigma_i:\mf g^i/\mf h^i\to \mf g^i$ for $i = 0,1$.
        \item[iv)] a Maurer-Cartan element $Q\in \mf h^1$,
    \end{itemize}
    satisfying the following data:
    \begin{itemize}
        \item[a)] $\partial:\mf g^1 \to \mf g^2$ and $[-,-]:\mf g^1 \times \mf g^1 \to \mf g^2$ are continuous.
        \item[b)] There is an open neighborhood $U\subset \mf g^0/\mf h^0$ of $0\in \mf g^0/\mf h^0$, such that for all $Q' \in \mf g^1$, the gauge action as in Definition \ref{def:gaugeeq} of $\sigma_0(v)$ for $v\in U$ on $Q'$ is defined, the assignment
        $$
        U\times \mf g^1\ni (v,Q') \mapsto (Q')^{\sigma_0(v)}\in \mf g^1
        $$
        is jointly continuous, and for each fixed $Q'$, the class $(Q')^{\sigma_0(v)}$ mod $\mf h^1$ depends smoothly on $v\in U$.
        \item[c)] For $v\in U$, $Q'\in \mf g^1$ is Maurer-Cartan if and only if $(Q')^{\sigma_0(v)}$ is Maurer-Cartan.
    \end{itemize}
\end{assumptions}
Under these assumptions, we have the following result.
\begin{thm}\label{thm:mainthm}
    Assume that we are in the setting as described in Assumptions \ref{ass:mainthm}. Assume that 
    \begin{equation}\label{eq:cohomcond}
    H^1(\mf g/\mf h, \overline{\partial + [Q,-]}) = 0.
    \end{equation}
    Then for every open neighborhood $V$ of $0 \in U$, there exists an open neighborhood $\mc U\subset MC(\mf g)$ of $Q$ such that for any $Q'\in \mc U$ there exists a family $I\subset V$ smoothly parametrized by an open neighborhood of $0\in \ker(\overline{\partial:\mf g^0/\mf h^0 \to \mf g^1/\mf h^1})$ with $(Q')^{\sigma_0(v)}\in \mf h^1$ for $v \in I$.
\end{thm}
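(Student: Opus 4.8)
The plan is to gauge-fix: for $Q' \in MC(\mf g)$ near $Q$ I want to solve $\overline{(Q')^{\sigma_0(v)}} = 0$ in $\mf g^1/\mf h^1$ for $v$ near $0$, and then describe the full solution set. I would introduce the map
$$
\Psi \colon U \times \mf g^1 \to \mf g^1/\mf h^1, \qquad \Psi(v, Q') = \overline{(Q')^{\sigma_0(v)}},
$$
which by Assumption \ref{ass:mainthm}\,b) is jointly continuous, smooth in $v$, and satisfies $\Psi(0,Q) = \overline{Q} = 0$ because $Q \in \mf h^1$. Using Lemma \ref{lem:gaugeactionexpl} together with $\sigma_0(0) = 0$, its derivative in $v$ at $(0,Q)$ is computed from
$$
\left.\tfrac{d}{dt}\right|_{t=0}(Q)^{\sigma_0(tv)} = \partial(\sigma_0(v)) - [\sigma_0(v), Q] = \partial(\sigma_0(v)) + [Q, \sigma_0(v)],
$$
so that after projecting to the quotient, $d_v\Psi(0,Q) = \overline{\partial + [Q,-]} \colon \mf g^0/\mf h^0 \to \mf g^1/\mf h^1$, the degree-zero part of the twisted differential.

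This is where the hypothesis enters: $H^1(\mf g/\mf h, \overline{\partial + [Q,-]}) = 0$ means precisely that the image of $d_v\Psi(0,Q)$ equals the full cocycle space $Z^1 := \ker(\overline{\partial + [Q,-]} \colon \mf g^1/\mf h^1 \to \mf g^2/\mf h^2)$. I would fix decompositions $\mf g^0/\mf h^0 = K \oplus C$ with $K = \ker \overline{\partial + [Q,-]}$ and $\mf g^1/\mf h^1 = Z^1 \oplus W$, so that $d_v\Psi(0,Q)$ restricts to an isomorphism $C \xrightarrow{\sim} Z^1$ and $\overline{\partial + [Q,-]}$ is injective on $W$. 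Composing with the projection $\pi_{Z^1}$ and restricting the solving variable to $C$ gives $\Phi(c, Q') = \pi_{Z^1}\Psi(c, Q')$ whose $c$-derivative at $(0,Q)$ is an isomorphism of finite-dimensional spaces; since $\Phi$ is smooth in $c$ and continuous in the parameter $Q'$, a parametrized implicit function theorem yields a continuous assignment $Q' \mapsto c(Q')$ on a neighbourhood $\mc U$ of $Q$ with $c(Q) = 0$ and $\pi_{Z^1}\Psi(c(Q'), Q') = 0$.

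I expect the main difficulty to be exactly that $d_v\Psi(0,Q)$ is \emph{not} surjective onto $\mf g^1/\mf h^1$: its image is only $Z^1$, so the step above annihilates the cocycle component but leaves a potentially nonzero obstruction $w(Q') := \Psi(c(Q'), Q') \in W$. Resolving this is where Assumption \ref{ass:mainthm}\,c) is essential: $P := (Q')^{\sigma_0(c(Q'))}$ is again Maurer-Cartan, so subtracting the Maurer-Cartan equation for $Q$ and writing $a = P - Q$ gives $(\partial + [Q,-])a + \tfrac{1}{2}[a,a] = 0$. Projecting to $\mf g^2/\mf h^2$, using that $\mf h$ is a subalgebra (so the purely-$\mf h$ quadratic term drops out) and that $\overline{a} = \overline{P} = w(Q') \in W$, I obtain an identity of the form $\overline{\partial + [Q,-]}(w) = -\tfrac{1}{2}\,\overline{[\sigma_1(w) + a_{\mf h},\, \sigma_1(w) + a_{\mf h}]}$ whose right-hand side is $O(\|w\|^2) + O(\|a_{\mf h}\|\,\|w\|)$, while injectivity of $\overline{\partial + [Q,-]}$ on $W$ bounds the left-hand side below by a constant times $\|w\|$. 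Shrinking $\mc U$ so that $a$ and $w$ are small then forces $w(Q') = 0$, i.e. $P \in \mf h^1$. Finally, the equation effectively solved is $\pi_{Z^1}\Psi(\cdot, Q') = 0$, along whose solution set the $W$-component vanishes automatically by the same Maurer-Cartan argument, so this set is a smooth submanifold whose tangent space is the kernel of the linearized gauge-fixing map; this produces the claimed smooth family $I$, parametrized by a neighbourhood of $0$ in that kernel, which lies in $V$ after a final shrinking of $\mc U$.
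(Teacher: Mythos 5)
The paper does not actually prove Theorem \ref{thm:mainthm}: it is quoted verbatim from the author's earlier work (\cite[Theorem 3.20]{KarandeepStability}), so there is no in-paper argument to compare against. Your proposal is a correct reconstruction of the standard proof of statements of this type, and it is almost certainly the route taken in the cited reference: linearize the gauge action on the quotient $\mf g^1/\mf h^1$ to get $\overline{\partial+[Q,-]}$, use the vanishing of $H^1(\mf g/\mf h)$ to solve the cocycle part by the implicit function theorem, and then kill the remaining component $w\in W$ by a Kuranishi-type quadratic estimate extracted from the Maurer--Cartan equation, where the subalgebra property of $\mf h$ is exactly what makes the purely $\mf h$-quadratic term drop out of the projection to $\mf g^2/\mf h^2$. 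Your identification of the parametrizing space as $\ker\bigl(\overline{\partial+[Q,-]}:\mf g^0/\mf h^0\to\mf g^1/\mf h^1\bigr)$ is the right one; the theorem as printed writes $\ker(\overline{\partial})$, but the applications (e.g.\ Theorems \ref{thm:lierigid} and \ref{thm:stabsubalg}) confirm that the twisted differential is meant.

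Two technical points deserve a remark, though neither is a gap in the finite-dimensional setting this paper actually uses. First, the parametrized implicit function theorem requires the $v$-derivative of $\Psi$ to vary continuously with $Q'$; Assumption \ref{ass:mainthm}\,b) only records joint continuity of $\Psi$ and smoothness in $v$ for each fixed $Q'$, so in the general locally convex setting you would need to invoke the stronger regularity that the full statement in \cite{KarandeepStability} assumes (in the degreewise finite-dimensional case it is automatic from the explicit formula of Lemma \ref{lem:gaugeactionexpl}). Second, in your estimate $c\norm{w}\le C\norm{w}\bigl(\norm{w}+\norm{a_{\mf h}}\bigr)$ the quantity $\norm{w}$ lives in the finite-dimensional quotient, which is fine, but $\norm{a_{\mf h}}$ should be read as the seminorm furnished by continuity of the bracket $[-,-]:\mf g^1\times\mf g^1\to\mf g^2$; with that reading the argument closes as you describe.
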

It turns out that when $\mf g$ is degree-wise finite-dimensional, for any choice of splittings $\sigma_0$ and $\sigma_1$, properties a), b) and c) above are automatically satisifed for $U = \mf g^0/\mf h^0$.
\begin{lem}
    Let $(\mf g, \partial,[-,-])$ be a degree-wise finite-dimensional differential graded Lie algebra. Then for any differential graded Lie subalgebra $\mf h\subset \mf g$, and any choice of splittings $\sigma_i:\mf g^i/\mf h^i \to \mf g^i$, Assumptions \ref{ass:mainthm}a)-c) are satisfied.
\end{lem}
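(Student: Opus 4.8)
The plan is to verify the three properties a), b), c) directly, taking $U=\mf g^0/\mf h^0$ throughout, and to exploit that degree-wise finite-dimensionality trivializes all topological and analytic subtleties. Property a) is immediate: each $\mf g^i$ carries its unique Hausdorff locally convex topology as a finite-dimensional vector space, and with respect to these topologies every linear map is continuous and every bilinear map is continuous. Since $\partial$ is linear and $[-,-]$ is bilinear, both $\partial:\mf g^1\to\mf g^2$ and $[-,-]:\mf g^1\times\mf g^1\to\mf g^2$ are continuous.

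For property b) I would feed the splitting into the explicit formula of Lemma~\ref{lem:gaugeactionexpl}. Because $\mf g^0$ is finite-dimensional, the $1$-connected integrating group $G$ exists, the maps $\exp:\mf g^0\to G$ and $\rho:G\to\GL(\mf g^1)$ are smooth and globally defined, and $\sigma_0$ is linear, hence smooth. Substituting $X=\sigma_0(v)$ into \eqref{eq:gaugeaction} gives
$$
(Q')^{\sigma_0(v)}=\rho(\exp(-\sigma_0(v)))(Q')+\int_0^1\rho(\exp(-s\sigma_0(v)))(\partial\sigma_0(v))\,\dd s,
$$
which is defined for every $v\in\mf g^0/\mf h^0$, so one may indeed take $U=\mf g^0/\mf h^0$. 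The first summand is a composition of smooth maps together with the evaluation $\GL(\mf g^1)\times\mf g^1\to\mf g^1$, hence smooth in $(v,Q')$. The integrand of the second summand is smooth in $(s,v)$ on the compact interval $[0,1]$, so by differentiation under the integral sign the integral is smooth in $v$ and independent of $Q'$. Therefore $(v,Q')\mapsto(Q')^{\sigma_0(v)}$ is smooth, in particular jointly continuous; post-composing with the linear (hence smooth) projection $\mf g^1\to\mf g^1/\mf h^1$ shows that the class of $(Q')^{\sigma_0(v)}$ mod $\mf h^1$ depends smoothly on $v$ for each fixed $Q'$.

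The substance lies in property c), the invariance of the Maurer-Cartan equation under the gauge flow. Writing $F(R)=\partial R+\tfrac12[R,R]$ for the Maurer-Cartan map and letting $Q_t$ solve \eqref{eq:gaugeeq} with $X=\sigma_0(v)$, I would differentiate $F(Q_t)$ and simplify using $\partial^2=0$, the graded Leibniz rule, and the graded Jacobi identity from Definition~\ref{def:dgla}, together with the fact that $[-,-]$ restricted to $\mf g^1$ is symmetric. A short computation then yields the identity
$$
\frac{d}{dt}F(Q_t)=-[X,F(Q_t)].
$$
This is a linear ordinary differential equation for the curve $t\mapsto F(Q_t)\in\mf g^2$; by uniqueness of solutions, $F(Q_0)=0$ forces $F(Q_t)\equiv0$, and running the flow backwards gives the converse implication. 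Evaluating at $t=1$ then shows that $Q'$ is Maurer-Cartan if and only if $(Q')^{\sigma_0(v)}$ is, which is exactly c).

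I expect the main obstacle to be precisely this last computation: the signs coming from the Leibniz and Jacobi identities, and the cancellation of the two $\tfrac12[X,[Q_t,Q_t]]$ terms arising from $\partial Q_t=F(Q_t)-\tfrac12[Q_t,Q_t]$ and from the Jacobi identity, must be tracked carefully to obtain the clean adjoint form $-[X,F(Q_t)]$. Everything else reduces to the standard facts that on finite-dimensional spaces multilinear maps are smooth and that parameter-dependent integrals of smooth integrands over compact intervals are smooth, so once the identity above is in hand the ODE uniqueness argument closes the proof.
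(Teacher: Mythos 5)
Your proposal is correct and follows essentially the same route as the paper: continuity of multilinear maps on finite-dimensional spaces for a), the explicit formula of Lemma~\ref{lem:gaugeactionexpl} for b), and the ODE $\frac{d}{dt}\alpha_t = -[X,\alpha_t]$ with uniqueness of solutions for c). Your additional remarks (differentiation under the integral sign for b), and running the flow backwards to get the converse in c)) only make explicit details the paper leaves implicit.
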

\begin{proof}
    Note that $\mf h\subset \mf g$ is trivially a closed subspace of finite codimension with respect to the usual topology on finite-dimensional vector spaces. Let $\sigma_i:\mf g^i/\mf h^i\to \mf g^i$ be splittings. We check properties a)-c).
    \begin{itemize}
        \item [a)] Multilinear maps between finite-dimensional vector spaces are automatically continuous.
        \item [b)] This follows directly from the expression given in Lemma \ref{lem:gaugeactionexpl}.
        \item [c)] Although this could be checked directly, using the expression given in \eqref{eq:gaugeaction}, we use the defining differential equation. Let $Q_t$ be the solution to \eqref{eq:gaugeeq} associated to $X\in \mf g^0$ such that $Q_0$ is a Maurer-Cartan element. Define 
        $$
        \alpha_t := \partial(Q_t) + \frac{1}{2}[Q_t,Q_t] \in \mf g^2.
        $$
        Then it can be shown that
        $$
        \frac{d}{dt}\alpha_t = -[X,\alpha_t], \,\,\, \alpha_0 = 0 \in \mf g^2.
        $$
        Observe that the constant path $\alpha_t = 0$ satisfies the above equation. As this is a first order linear homogeneous ordinary differential equation, it has a unique solution. Consequently, $\alpha_t \equiv 0$, and $Q^X$ is Maurer-Cartan.
    \end{itemize}
\end{proof}
\section{Applications}\label{sec:applications}
In this section we recover various stability and rigidity results by making appropriate choices for the differential graded Lie algebras $(\mf g,\partial,[-,-])$ and subalgebra $\mf h\subset \mf g$.
All vector spaces are finite-dimensional over either the real or complex numbers and maps will be $\mb R$-linear or $\mb C$-linear respectively.

As the obstruction \eqref{eq:cohomcond} is some Lie algebra cohomology for most examples, we recall the definition.
\begin{defn}
    Let $(V,\mu)$ be a Lie algebra, and let $\rho:V\to \End(W)$ be a representation. The \emph{Chevalley-Eilenberg complex of $(V,\mu)$ with values in $W$} is defined by
    $$
    C_{CE}^\bullet(V,W):=(\wedge^\bullet V^\ast \otimes W, d_{CE}^\rho),
    $$
    where for $\alpha:\wedge^k V\to W$, $x_0,\dots,x_k\in V$, we have
    \begin{align*}
    d_{CE}^\rho(\alpha)(x_0,\dots, x_k) := \sum_{i=0}^k (-1)^i \rho(x_i)&(\alpha(x_0,\dots, \widehat{x_i}, \dots, x_k)) \\&- \sum_{0\leq i<j\leq k} (-1)^{i+j}\alpha([x_i,x_j], x_0, \dots, \widehat{x_i},\dots, \widehat{x_j},\dots, x_k).
    \end{align*}
\end{defn}
\begin{exmp} We provide some examples in low degree:
    \begin{itemize}
        \item [-] In degree $0$, the cocycles are the invariants of the representation $\rho$.
        \item[-] In degree $1$, the cocycles are the $W$-valued derivations on $(V,\mu)$, which we will denote by $\text{Der}((V,\mu),W)$. When $W = V$ and $\rho$ is the adjoint representation, we suppress $W$ from the notation. The coboundaries are the derivations of the form $\delta_w:V\to W$ for $w\in W$, given by $\delta_w(v):= \rho(v)(w)$ for $v\in V$. 
    \end{itemize}
\end{exmp}
\subsection{Rigidity of Lie and associative algebras}\label{ssec:rigidlieass}
When dealing with rigidity from the perspective of Theorem \ref{thm:mainthm}, the property of a structure $Q_0$ we wish to preserve up to equivalence is the property of being equal to $Q_0$. Hence, that corresponds to taking $\mf h = 0$.
\subsubsection{Lie algebras}\label{sssec:lierigid}
Let $V$ be a vector space. A Lie algebra structure on $V$ is a linear map $\mu:\wedge^2 V \to V$, satisfying the Jacobi identity: for $x,y,z \in V$, we have
$$
\mu(x,\mu(y,z)) + \mu(y,\mu(z,x)) + \mu(z,\mu(x,y)) = 0.
$$
It turns out that there is a differential graded Lie algebra $(\mf g,0, [-,-])$ such that its Maurer-Cartan elements are precisely the Lie algebra structures on $V$:
\begin{lem}[\cite{cohomgradlie}] Let $V$ be a vector space. The graded vector space $\mf g^\bullet:= \wedge^{\bullet+1} V^\ast \otimes V$ carries a graded Lie algebra structure which for $\alpha \otimes v \in \mf g^k, \beta \otimes w \in \mf g^l$ is defined by
$$
[\alpha \otimes v,\beta \otimes w]_{NR} = \beta \wedge \iota_w(\alpha) \otimes v - (-1)^{kl} \alpha\wedge \iota_v(\beta) \otimes w.
$$
An element $\mu \in \mf g^1$ is Maurer-Cartan, i.e. $[\mu,\mu]_{NR} = 0$, if and only if $\mu$ satisfies the Jacobi identity. Moreover, for $A \in \mf g^0 = \End(V), \mu \in \mf g^1$, we have 
$$
\mu^A = \exp(A)\circ \mu(\exp(-A)-, \exp(-A)-).
$$
\end{lem}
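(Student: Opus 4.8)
The plan is to establish the three assertions --- that $[-,-]_{NR}$ is a graded Lie bracket, that the Maurer--Cartan equation coincides with the Jacobi identity, and that the gauge action is conjugation by $GL(V)$ --- by passing through the classical identification of $\mf g^\bullet$ with the coderivations of an exterior coalgebra, which renders the first two essentially formal. First I would identify $\mf g^k=\wedge^{k+1}V^\ast\otimes V$ with $\Hom(\wedge^{k+1}V,V)$, the space of skew-symmetric $(k+1)$-linear maps into $V$, and recall that the reduced exterior coalgebra $\bigoplus_{n\geq 1}\wedge^n V$ is cofree: a coderivation $D$ is uniquely determined by its corestriction $\mathrm{pr}_V\circ D\colon\wedge^\bullet V\to V$, and requiring this corestriction to be supported on $\wedge^{k+1}V$ identifies $\mf g^k$ with the coderivations that lower word length by $k$. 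This realizes $\mf g^\bullet$ as a graded subspace of the graded Lie algebra of coderivations under the graded commutator, closed under the bracket because composing a length-lowering-by-$k$ with a length-lowering-by-$l$ coderivation lowers length by $k+l$.

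The one genuine computation at this stage is to verify that, transporting the graded commutator of coderivations back along the corestriction and inserting the Koszul signs dictated by the grading, one recovers precisely $[\alpha\otimes v,\beta\otimes w]_{NR}=\beta\wedge\iota_w(\alpha)\otimes v-(-1)^{kl}\alpha\wedge\iota_v(\beta)\otimes w$. Once this is checked, graded skew-symmetry and the graded Jacobi identity are inherited from the commutator for free. I expect this sign bookkeeping to be the main obstacle: carried out by hand --- expanding $[[x,y],z]-[x,[y,z]]+(-1)^{|x||y|}[y,[x,z]]$ directly from the formula --- it is a long and error-prone computation, whereas the coderivation description makes it automatic, at the cost of fixing the grading conventions so that the ambient Koszul signs reproduce the $(-1)^{kl}$ above. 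Graded skew-symmetry, on the other hand, can simply be read off the formula by exchanging the two arguments.

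For the Maurer--Cartan claim I would note that a degree-$1$ element $\mu\in\mf g^1=\Hom(\wedge^2V,V)$ corresponds to a coderivation $D_\mu$ lowering word length by one, and that $[\mu,\mu]_{NR}$ corresponds to $[D_\mu,D_\mu]=2D_\mu^2$. Since $D_\mu^2$ is again a coderivation it is determined by its corestriction, and on $\wedge^3 V$ this corestriction evaluates on $x,y,z\in V$ to a nonzero multiple of the Jacobiator $\mu(x,\mu(y,z))+\mu(y,\mu(z,x))+\mu(z,\mu(x,y))$. Hence $[\mu,\mu]_{NR}=0$ if and only if $\mu$ satisfies the Jacobi identity, which is exactly the Maurer--Cartan equation \eqref{eq:MC} since $\partial=0$ for this differential graded Lie algebra.

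Finally, for the gauge action I would first determine the $\mf g^0$-module structure on $\mf g^1$: writing $A\in\mf g^0=\End(V)$ and evaluating $[A,\mu]_{NR}$ on $x,y\in V$ yields $[A,\mu]_{NR}(x,y)=A(\mu(x,y))-\mu(Ax,y)-\mu(x,Ay)$, the standard action of $\gl(V)$ on bilinear maps. This is precisely the derivative at the identity of the $GL(V)$-representation $\rho(g)(\mu)=g\circ\mu(g^{-1}-,g^{-1}-)$, so $\rho$ is the representation integrating the module structure appearing in Lemma \ref{lem:gaugeactionexpl}. Because $\partial=0$, the integral term in \eqref{eq:gaugeaction} vanishes and $\mu^A=\rho(\exp(-A))(\mu)=\exp(-A)\circ\mu(\exp(A)-,\exp(A)-)$, which is the asserted formula up to the sign convention for the exponent; this sign is immaterial since $A$ ranges over all of $\End(V)$, and the only point needing care is to reconcile it with the orientation of the flow in Definition \ref{def:gaugeeq}.
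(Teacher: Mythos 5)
The paper offers no proof of this lemma at all---it is quoted from \cite{cohomgradlie}---so your argument cannot be compared against an in-paper proof; judged on its own terms, your plan is sound and is the standard modern route. Realizing $\mf g^k\cong\Hom(\wedge^{k+1}V,V)$ as the corestrictions of coderivations of the cofree exterior coalgebra does make graded skew-symmetry and the graded Jacobi identity automatic, and it reduces the Maurer--Cartan claim to the observation that $[\mu,\mu]_{NR}$ corresponds to $2D_\mu^2$, whose corestriction to $\wedge^3V$ is the Jacobiator. The original reference instead verifies the graded Jacobi identity by direct expansion of the insertion operations; your approach trades that computation for the coalgebra setup plus the single check that the transported commutator reproduces the displayed formula with the sign $(-1)^{kl}$. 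Be aware that this one check is the entire content of the first assertion and you have only announced it, not performed it; as a plan this is acceptable, but the proof is not complete until that identification (including the Koszul signs coming from the shift $\mf g^k=\wedge^{k+1}V^\ast\otimes V$) is written out.

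On the gauge formula, your computation is correct and the discrepancy you flag is real: with the bracket as displayed one gets $[A,\mu]_{NR}(x,y)=A\mu(x,y)-\mu(Ax,y)-\mu(x,Ay)$, and the flow $\tfrac{d}{dt}\mu_t=-[A,\mu_t]_{NR}$ of Definition \ref{def:gaugeeq} integrates to $\mu^A=\exp(-A)\circ\mu(\exp(A)-,\exp(A)-)$, i.e.\ the lemma's formula with $A$ replaced by $-A$. You are right that this is harmless for every use made of the lemma (the gauge orbit is the $GL(V)_0$-orbit either way), but it would be cleaner to state explicitly that the formula in the lemma holds for the convention $\tfrac{d}{dt}\mu_t=+[A,\mu_t]_{NR}$, rather than leaving it as ``up to the sign convention.''
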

Now that Lie brackets on a vector space are realized as Maurer-Cartan elements of a degreewise finite-dimensional graded Lie algebra, we are in the setting of Section \ref{sec:defthydgla}. Moreover, as the gauge equivalences are given by the action of (the connected component of the identity) of $GL(V)$, the classical rigidity question is an instance of the question \eqref{qrig}. Recall that a choice of Lie algebra structure $\mu\in \mf g^1$ induces a differential on $\mf g$ by $[\mu,-]_{NR}$. We can give an explicit description of the cohomology.
\begin{lem}
    The differential $[\mu,-]_{NR}$ is equal to the Chevalley-Eilenberg differential of $(V,\mu)$ with values in the adjoint representation $V$. 
\end{lem}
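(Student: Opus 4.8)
The plan is to evaluate both operators on $k+2$ vectors and match them term by term, using the identification of $\mf g^k = \wedge^{k+1}V^\ast \otimes V$ with the space of alternating $(k+1)$-linear maps $\wedge^{k+1}V \to V$, i.e.\ with the Chevalley--Eilenberg cochains $C^{k+1}_{CE}(V,V)$. Under this shift by one, both $[\mu,-]_{NR}\colon \mf g^k \to \mf g^{k+1}$ and $d_{CE}^{\ad}\colon C^{k+1}_{CE}(V,V)\to C^{k+2}_{CE}(V,V)$ are degree-one maps, so it suffices to show that for every $\Phi \in \mf g^k$ and all $x_0,\dots,x_{k+1}\in V$ one has $[\mu,\Phi]_{NR}(x_0,\dots,x_{k+1}) = d_{CE}^{\ad}(\Phi)(x_0,\dots,x_{k+1})$, where on the right I read the displayed formula for the differential with cochain degree $k+1$ and $\rho = \ad$.

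By bilinearity I would reduce to decomposable elements, writing $\Phi = \alpha\otimes v$ with $\alpha\in \wedge^{k+1}V^\ast$ and $\mu = \sum \xi\otimes u$ with $\xi\in\wedge^2 V^\ast$, $u\in V$. Applying the given formula for $[-,-]_{NR}$ with the first argument in degree $1$ and the second in degree $k$ produces two terms, $\alpha\wedge\iota_v(\xi)\otimes u$ and $-(-1)^k\,\xi\wedge\iota_u(\alpha)\otimes v$, and the heart of the proof is to evaluate each wedge--interior-product expression on $x_0,\dots,x_{k+1}$.

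I expect these two terms to correspond exactly to the two sums in the definition of $d_{CE}^{\ad}$. Evaluating $\xi\wedge\iota_u(\alpha)$ on the $x_i$ expands, via the shuffle formula for the wedge of a $2$-form and a $k$-form together with $\iota_u\alpha(\cdots) = \alpha(u,\cdots)$, into a signed sum over pairs $i<j$ of $\xi(x_i,x_j)\,\alpha(u,x_0,\dots,\widehat{x_i},\dots,\widehat{x_j},\dots,x_{k+1})$; resumming the decomposition $\mu = \sum\xi\otimes u$ replaces $\sum \xi(x_i,x_j)u$ by $[x_i,x_j]=\mu(x_i,x_j)$ in the first slot of $\alpha$, producing the bracket-insertion sum $\sum_{i<j}(-1)^{i+j}\Phi([x_i,x_j],x_0,\dots,\widehat{x_i},\dots,\widehat{x_j},\dots,x_{k+1})$. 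Likewise, evaluating $\alpha\wedge\iota_v(\xi)$ expands into a signed sum over $i$ of $\alpha(x_0,\dots,\widehat{x_i},\dots,x_{k+1})\,\xi(v,x_i)$, and resumming turns $\sum\xi(x_i,v)u$ into $\mu(x_i,v)=\ad(x_i)v$, so this term becomes the adjoint-action sum $\sum_i(-1)^i\,\ad(x_i)\Phi(x_0,\dots,\widehat{x_i},\dots,x_{k+1})$.

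The main obstacle is purely the sign bookkeeping: I must check that the shuffle signs produced by the wedge products, the sign conventions for the interior products $\iota_v,\iota_u$, the explicit factor $-(-1)^k$ in the Nijenhuis--Richardson bracket, and the sign incurred by moving $[x_i,x_j]$ (resp.\ $x_i$) into the first slot of $\alpha$ combine to give precisely the coefficients $(-1)^{i+j}$ and $(-1)^i$ of the stated Chevalley--Eilenberg differential, with no residual overall sign. A convenient sanity check, which I would carry out first, is the case $k=0$ (so $\Phi\in \End(V)$), where both sides reduce to $\ad(x_0)\Phi(x_1) - \ad(x_1)\Phi(x_0) - \Phi([x_0,x_1])$ and the signs can be read off directly; once the conventions are pinned down there, the general case follows from the shuffle bookkeeping above.
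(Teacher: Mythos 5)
Your proposal is correct and is the standard direct verification; the paper itself states this lemma without proof (treating it as a known fact about the Nijenhuis--Richardson bracket), so there is no alternative argument to compare against. The structure of your computation is right: with the paper's conventions, the term $\alpha\wedge\iota_v(\xi)\otimes u$ does yield the adjoint-action sum and $-(-1)^{k}\,\xi\wedge\iota_u(\alpha)\otimes v$ the bracket-insertion sum, and your proposed $k=0$ sanity check indeed comes out on the nose as $\ad(x_0)\Phi(x_1)-\ad(x_1)\Phi(x_0)-\Phi([x_0,x_1])$ with no residual sign. The only thing separating this from a complete proof is actually carrying out the shuffle-sign bookkeeping you defer in the last paragraph (and being consistent about $\xi(v,x_i)$ versus $\xi(x_i,v)$, which you use interchangeably at one point); once that is written out, the argument is complete.
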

Consequently, we obtain:
\begin{thm}[\cite{nijenhuislie}] \label{thm:lierigid}
Let $(V,\mu)$ be a Lie algebra.  If $$H^2_{CE}(V,V) = 0,$$ where $V$ acts on itself by the adjoint representation, $\mu$ is rigid. Moreover, for Lie algebra structures $\mu'$ near $\mu$, the endomorphisms $A:V\to V$ for which $\exp(A)$ maps $\mu'$ to $\mu$ are smoothly parametrized by an open neighborhood of $0\in \text{Der}(V,\mu)$.
\end{thm}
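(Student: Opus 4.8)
The plan is to recognize rigidity of the (generally nonzero) Maurer--Cartan element $\mu$ as an instance of Theorem \ref{thm:mainthm} for the trivial subalgebra $\mf h = 0$, after first twisting the Nijenhuis--Richardson differential graded Lie algebra so that $\mu$ becomes the zero Maurer--Cartan element. Concretely, I would pass from $(\mf g, 0, [-,-]_{NR})$ to the twisted differential graded Lie algebra $(\mf g, \partial_\mu, [-,-]_{NR})$ with $\partial_\mu := [\mu,-]_{NR}$. Since $[\mu,\mu]_{NR} = 0$ one has $\partial_\mu^2 = 0$, and expanding the Maurer--Cartan equation for $\mu + \nu$ shows that $\nu \in \mf g^1$ is Maurer--Cartan for $\partial_\mu$ if and only if $\mu + \nu$ satisfies the Jacobi identity. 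Thus $0$ corresponds to $\mu$, and the Maurer--Cartan elements of the twist near $0$ correspond exactly to the Lie brackets near $\mu$.

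Before applying the theorem I would check that the two notions of gauge equivalence agree. Writing the initial value problem \eqref{eq:gaugeeq} for the differential $\partial_\mu$ and substituting $\mu'_t = \mu + \nu_t$, the graded skew-symmetry $[\mu,A]_{NR} = -[A,\mu]_{NR}$ (valid since $|\mu| = 1$ and $|A| = 0$) converts the twisted gauge equation for $\nu_t$ into the untwisted one, $\tfrac{d}{dt}\mu'_t = -[A,\mu'_t]_{NR}$. Hence the gauge orbit of $0$ in the twist is identified with the orbit of $\mu$ under $A \mapsto \mu^A = \exp(A)\circ\mu(\exp(-A)-,\exp(-A)-)$, that is, with the isomorphisms of $(V,\mu)$ realized by elements $\exp(A)$ of the identity component of $\GL(V)$.

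With these identifications in place, I would apply Theorem \ref{thm:mainthm} to $(\mf g, \partial_\mu, [-,-]_{NR})$ with $\mf h = 0$ and $Q = 0$; Assumptions \ref{ass:mainthm}a)--c) hold automatically by the preceding lemma, as $V$ and hence $\mf g$ are finite-dimensional. The hypothesis \eqref{eq:cohomcond} then reduces to $H^1(\mf g, \partial_\mu) = 0$, and under the identification of $[\mu,-]_{NR}$ with the Chevalley--Eilenberg differential together with the degree shift $\mf g^k = \wedge^{k+1}V^\ast \otimes V = C^{k+1}_{CE}(V,V)$, this is precisely $H^2_{CE}(V,V) = 0$. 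The theorem then gives that every Lie bracket $\mu'$ near $\mu$ admits $A \in \End(V)$ with $(\mu')^A = \mu$, i.e. $\exp(A)$ maps $\mu'$ to $\mu$, which is the asserted rigidity. For the final claim, the family produced by Theorem \ref{thm:mainthm} is smoothly parametrized by a neighborhood of $0$ in $\ker(\partial_\mu\colon \mf g^0 \to \mf g^1)$ (with $\sigma_0 = \mathrm{id}$ since $\mf h = 0$); because $\mf g^0 = \End(V) = C^1_{CE}(V,V)$ and $\partial_\mu$ there is the Chevalley--Eilenberg differential, this kernel is the space of $1$-cocycles, namely $\text{Der}(V,\mu)$.

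The only genuinely conceptual point is the reduction to $\mf h = 0$ via twisting: because Theorem \ref{thm:mainthm} forces the distinguished Maurer--Cartan element to lie in $\mf h^1 = \{0\}$, a nonzero $\mu$ cannot be inserted directly and must first be moved to the origin by replacing $\partial$ with $\partial + [\mu,-]_{NR}$. Once this is done, everything else is bookkeeping, namely verifying that the twist preserves gauge equivalence (via the sign computation above) and tracking the degree shift that turns $H^1$ of the twisted complex into $H^2_{CE}(V,V)$ and its degree-zero kernel into $\text{Der}(V,\mu)$. I expect no analytic difficulty, since finite-dimensionality makes the technical hypotheses of Theorem \ref{thm:mainthm} automatic.
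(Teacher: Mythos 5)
Your proof is correct and follows essentially the same route as the paper: realize Lie brackets as Maurer--Cartan elements of the Nijenhuis--Richardson graded Lie algebra, identify $[\mu,-]_{NR}$ with the Chevalley--Eilenberg differential, and apply Theorem \ref{thm:mainthm} with $\mf h = 0$. The one point you spell out that the paper leaves implicit is the twist by $\mu$ needed to place the distinguished Maurer--Cartan element at $0\in\mf h^1$, together with the check that the twisted and untwisted gauge flows agree; this is exactly the passage the paper gestures at when it replaces $\partial$ by $\partial+[Q,-]$ for a general $Q$, so your write-up is a faithful (and slightly more careful) version of the intended argument.
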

\subsubsection{Associative algebras}\label{sssec:assrigid}
For associative algebras, the situation is very similar to Lie algebras. An associative algebra structure on a vector space $V$ is a linear map $m:V\otimes V \to V$, which is associative: for $x,y,z \in V$, we have
$$
m(m(x\otimes y)\otimes z) = m(x\otimes m(y\otimes z)).
$$
For now, we do not address possible unitality of the algebra.

Again, there is a differential graded Lie algebra $(\mf g,0,[-,-])$ such that its Maurer-Cartan elements are precisely the associative algebra structures on $V$.
\begin{lem}[\cite{cohomassring}]
    Let $V$ be a vector space. The graded vector space $\mf g^\bullet := (V^\ast)^{\otimes (\bullet+1)} \otimes V$ carries a graded Lie algebra structure which for $\alpha \otimes v \in \mf g^k$, $\beta\otimes w\in \mf g^l$ is defined by
    $$
    [\alpha \otimes v, \beta \otimes w]_G = \beta\otimes \iota_w(\alpha) \otimes v -(-1)^{kl} \alpha\otimes \iota_v(\beta)\otimes w.
    $$
    An element $m \in \mf g^1$ is Maurer-Cartan, i.e. $[m,m]_G = 0$, if and only if the multiplication $m$ is associative. Moreover, for $A \in \mf g^0 = \End(V), m\in \mf g^1$, we have $$m^A=   \exp(A) \circ m(\exp(-A)-, \exp(-A)-). $$
\end{lem}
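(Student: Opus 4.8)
The three assertions mirror the Nijenhuis--Richardson case treated just above, so I would structure the proof around the same three points: verifying the graded Lie algebra axioms for $[-,-]_G$, identifying the Maurer--Cartan equation with associativity, and computing the gauge action. The graded skew-symmetry $[\alpha\otimes v,\beta\otimes w]_G=-(-1)^{kl}[\beta\otimes w,\alpha\otimes v]_G$ is immediate from the defining formula, since interchanging the two inputs simply swaps the two displayed summands. One interpretive point I would flag at the outset: just as the wedge product in the Nijenhuis--Richardson bracket automatically distributes $\beta$ over all positions, the term $\beta\otimes\iota_w(\alpha)\otimes v$ must here be read as summed over all slots into which $w$ may be contracted, carrying the Koszul signs $(-1)^{il}$; with this reading $[-,-]_G$ is the full Gerstenhaber bracket, and a single first-slot insertion would not suffice to encode associativity below.

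For the graded Jacobi identity the cleanest route is conceptual rather than combinatorial. I would identify $\mf g^\bullet$ with the graded Lie algebra of coderivations of the reduced tensor coalgebra $\bar T^c(sV)$ on the suspension $sV:=V[1]$, under which an element of $\mf g^k=(V^\ast)^{\otimes(k+1)}\otimes V$ corresponds, via its unique extension as a coderivation, to a coderivation of degree $k$, and $[-,-]_G$ becomes the graded commutator of coderivations. Graded skew-symmetry and graded Jacobi then hold automatically, being the standard identities for graded commutators of coderivations of a coalgebra. The step that actually requires work, and which I expect to be the main obstacle, is checking that the commutator of the coderivations extending $\alpha\otimes v$ and $\beta\otimes w$ corestricts to the cogenerators $sV$ exactly as the formula for $[\alpha\otimes v,\beta\otimes w]_G$ prescribes; unwinding the co-Leibniz rule reproduces precisely the insertion terms summed over all slots with the correct Koszul signs, and this is where careful bookkeeping of the suspension is needed. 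A purely hands-on alternative avoids coalgebras by verifying that the insertion product $f\,\bar\circ\,g$ is a graded right-symmetric (pre-Lie) product and invoking the general fact that the graded commutator of a pre-Lie product is a graded Lie bracket, at the cost of heavier slot computations.

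For the Maurer--Cartan assertion, since the differential is zero the equation \eqref{eq:MC} for $m\in\mf g^1$ reduces to $[m,m]_G=0$. I would compute $\tfrac12[m,m]_G$ directly from the bracket in the case $k=l=1$: the insertion terms combine into the Hochschild coboundary term, yielding
$$
\tfrac12[m,m]_G(x,y,z)=m(m(x,y),z)-m(x,m(y,z)),
$$
the associator of $m$. Hence $[m,m]_G=0$ if and only if $m$ is associative, which is the claim.

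Finally, for the gauge action I would invoke Lemma \ref{lem:gaugeactionexpl}. As $\partial=0$, that lemma gives $m^A=\rho(\exp(-A))(m)$, where $\rho:\GL(V)\to\GL(\mf g^1)$ integrates the $\mf g^0$-module structure $A\cdot m=[A,m]_G$ on $\mf g^1=\Hom(V^{\otimes 2},V)$. The remaining task is to recognize this module structure: expanding the bracket for $A\in\mf g^0=\End(V)$ and $m\in\mf g^1$ yields $[A,m]_G(x,y)=m(Ax,y)+m(x,Ay)-A\,m(x,y)$, which is exactly the infinitesimal generator of the transport-of-structure action $g\cdot m:=g^{-1}\circ m(g-,g-)$ of $\GL(V)$. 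Integrating this flow and substituting $g=\exp(-A)$ gives $m^A=\exp(A)\circ m(\exp(-A)-,\exp(-A)-)$, consistent with the sign convention used in the Nijenhuis--Richardson case. This completes the plan.
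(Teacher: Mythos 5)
Your plan is correct, and it is worth noting that the paper itself gives no argument here: the lemma is quoted from Gerstenhaber's original work, so any proof you supply is ``extra'' relative to the text. Your three-part structure (Lie axioms, Maurer--Cartan equals associativity, gauge action) is the standard one, and the most valuable observation in your write-up is the interpretive one you flag at the start: the displayed term $\beta\otimes\iota_w(\alpha)\otimes v$ must be read as the full insertion sum over all slots of $\alpha$ with Koszul signs $(-1)^{il}$, since a single first-slot contraction would make $\tfrac12[m,m]_G$ equal to $m(m(-,-),-)$ rather than the associator, and the lemma would be false as stated. Your two routes to graded Jacobi (coderivations of the tensor coalgebra on $V[1]$, or the graded pre-Lie property of the insertion product) are both standard and either suffices; the coalgebra route is cleaner but, as you note, requires the corestriction/co-Leibniz bookkeeping. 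The degree-one computation of $\tfrac12[m,m]_G$ is right. One caution on the final step: the identity $[A,m]_G(x,y)=m(Ax,y)+m(x,Ay)-A\,m(x,y)$ versus its negative depends on which of the two terms in the displayed bracket you read as ``insert the second argument into the first,'' and combined with the sign in the flow equation \eqref{eq:gaugeeq} this determines whether you land on $\exp(A)\circ m(\exp(-A)-,\exp(-A)-)$ or the same expression with $A$ replaced by $-A$. Your conclusion matches the paper's stated formula, but you should fix the convention explicitly (e.g.\ by checking the degree-$(0,1)$ bracket directly against the displayed formula) rather than importing the sign from the Nijenhuis--Richardson case, since the same ambiguity is present there. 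This does not affect the substance of the lemma, as the two conventions differ only by the reparametrization $A\mapsto -A$ of $\mf g^0$.
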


Again, the gauge equivalences are given by the action of (the connected component of the identity) of $GL(V)$. The Maurer-Cartan element $m\in \mf g^1$ induces a differential $[m,-]_G$ on $\mf g$. We can give an explicit description of this differential:
\begin{lem}
The differential $[m,-]_G$ is equal to the Hochschild differential of $(V,m)$ with values in the $(V,m)$-bimodule $V$.
\end{lem}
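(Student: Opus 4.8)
The plan is to verify the identity by a direct term-by-term computation, exactly parallel to the Chevalley--Eilenberg computation in the Lie case treated just above. First I would fix the identifications: an element $f \in \mf g^k = (V^\ast)^{\otimes(k+1)}\otimes V$ is the same datum as a multilinear map $f \in \Hom(V^{\otimes(k+1)}, V)$, i.e. a Hochschild $(k+1)$-cochain of the associative algebra $(V,m)$ with values in itself, where $V$ is regarded as a bimodule via $x\cdot w := m(x,w)$ and $w\cdot x := m(w,x)$. Under this identification the target expression is the Hochschild coboundary
\begin{align*}
(\delta f)(x_0, \dots, x_{k+1}) &= m(x_0, f(x_1, \dots, x_{k+1})) + (-1)^{k+2} m(f(x_0, \dots, x_k), x_{k+1}) \\
&\quad + \sum_{i=0}^{k}(-1)^{i+1} f(x_0, \dots, m(x_i, x_{i+1}), \dots, x_{k+1}),
\end{align*}
and the goal is to show $[m,f]_G$ coincides with this as a multilinear map.

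Next I would expand $[m,f]_G$ using the formula for $[-,-]_G$ from the preceding lemma, extended bilinearly from decomposable tensors. Writing $m = \mu_0\otimes\mu_1\otimes u$ and $f = \alpha_0\otimes\cdots\otimes\alpha_k\otimes v$, the contraction $\iota_w(\alpha)$ pairs the vector part of one factor against the covector factors of the other, summing over slots, so that $\beta\otimes\iota_w(\alpha)\otimes v$ records the insertion of one multilinear map into each available argument of the other. Concretely, the first term $\alpha_f\otimes\iota_{v}(\mu)\otimes u$ contracts the output slot $v$ of $f$ into $m$ (insertion of $f$ into $m$), while the second term $\mu\otimes\iota_{u}(\alpha_f)\otimes v$ contracts the output slot $u$ of $m$ into $f$ (insertion of $m$ into $f$). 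I would carry out this expansion and evaluate the resulting element of $(V^\ast)^{\otimes(k+2)}\otimes V$ on an arbitrary tuple $(x_0, \dots, x_{k+1})$.

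The core of the argument is then the matching. Contracting the output of $f$ into each of the \emph{two} covector slots of $m$ produces precisely the two outer terms $m(x_0, f(\dots))$ and $m(f(\dots), x_{k+1})$, i.e. the left and right bimodule-multiplication terms; contracting the output of $m$ into each of the \emph{$k+1$} covector slots of $f$ produces the inner terms $f(x_0, \dots, m(x_i,x_{i+1}), \dots, x_{k+1})$ for $i = 0, \dots, k$. Reading these off and comparing with the display above shows $[m,f]_G = \delta f$.

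The step I expect to be the main obstacle is the sign bookkeeping. The bracket $[m,f]_G$ carries the global factor $(-1)^{k}$ coming from the degrees $m\in\mf g^1$, $f\in\mf g^k$, together with the internal signs generated as the insertion position sweeps across the covector slots, and one must check that after reindexing these reproduce exactly the alternating signs $(-1)^{i+1}$ together with the boundary signs on the two outer terms. Since the combinatorics of inserting a binary operation into the arguments of a cochain is standard (this is Gerstenhaber's original observation) and the computation is formally identical to the Lie-algebra case, no difficulty beyond careful sign tracking should arise.
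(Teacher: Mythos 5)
The paper states this lemma without proof, treating it as the classical observation of Gerstenhaber, so there is no in-paper argument to compare against; your direct term-by-term verification is exactly the standard way to establish it. Your outline is correct: under the identification $\mf g^k \cong \Hom(V^{\otimes(k+1)},V)$, the term of $[m,f]_G$ that inserts the output of $f$ into the two argument slots of $m$ yields the two outer (bimodule) terms of the Hochschild coboundary, and the term inserting the output of $m$ into the $k+1$ slots of $f$ yields the inner terms $f(x_0,\dots,m(x_i,x_{i+1}),\dots,x_{k+1})$. The only caveat is that the paper's displayed formula for $[-,-]_G$ is schematic (it suppresses the sum over insertion positions and the internal Koszul signs of the circle product), so a fully rigorous version of your argument would need to write out the pre-Lie composition $f\circ g=\sum_i(-1)^{\ast}f(\dots,g(\dots),\dots)$ with its signs before matching against $(-1)^{i+1}$ and $(-1)^{k+2}$; you correctly flag this as the remaining bookkeeping, and it is routine.
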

Consequently, Theorem \ref{thm:mainthm} implies:
\begin{thm}[\cite{gerstenhaber}]\label{thm:assrigid}
    Let $(V,m)$ be an associative algebra. If $$H^2_{H}(V,V) = 0,$$ where $H^2_H(V,V)$ is the Hochschild cohomology of $(V,m)$ with values in the bimodule $V$, then $m$ is rigid. Moreover, for associative algebra structures $m'$ near $m$, the endomorphisms $A:V\to V$ for which $\exp(A)$ maps $m'$ to $m$ are smoothly parametrized by an open neighborhood of $0\in \text{Der}(V,m)$.
\end{thm}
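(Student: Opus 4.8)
The plan is to mirror the proof of the Lie-algebra case (Theorem \ref{thm:lierigid}) essentially verbatim, replacing the Nijenhuis--Richardson bracket by the Gerstenhaber bracket and Chevalley--Eilenberg cohomology by Hochschild cohomology. Concretely, I would apply Theorem \ref{thm:mainthm} to the differential graded Lie algebra governing deformations of $m$, namely the twist $(\mf g, \partial_m := [m,-]_G, [-,-]_G)$ of the graded Lie algebra $\mf g^\bullet = (V^\ast)^{\otimes(\bullet+1)}\otimes V$ by the Maurer--Cartan element $m$. Since $[m,m]_G = 0$, the operator $\partial_m$ squares to zero and $0$ is a Maurer--Cartan element of the twist; moreover the assignment $\epsilon \mapsto m + \epsilon$ is a bijection between Maurer--Cartan elements of the twist near $0$ and associative multiplications near $m$. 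To encode rigidity I take $\mf h = 0$ and $Q = 0$, so that $\mf g/\mf h = \mf g$ and the twisted differential appearing in Theorem \ref{thm:mainthm} is $\overline{\partial_m + [0,-]} = \partial_m$. The preceding lemma guarantees that Assumptions \ref{ass:mainthm}a)--c) hold in this degree-wise finite-dimensional setting, so Theorem \ref{thm:mainthm} applies once its cohomological hypothesis is checked.

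Next I would translate the hypothesis and the conclusion across the degree shift $\mf g^k = (V^\ast)^{\otimes(k+1)}\otimes V \cong \Hom(V^{\otimes (k+1)}, V)$, the space of Hochschild $(k+1)$-cochains. By the lemma identifying $[m,-]_G$ with the Hochschild differential of $(V,m)$ with values in the bimodule $V$, this shift is an isomorphism of complexes, so that $H^1(\mf g/\mf h, \partial_m) = H^1(\mf g, \partial_m) \cong H^2_{H}(V,V)$. Thus the standing hypothesis $H^2_{H}(V,V) = 0$ is precisely the cohomological condition \eqref{eq:cohomcond}. I would likewise identify the parameter space of the conclusion: since $\mf h = 0$ the splitting $\sigma_0$ is the identity and $\ker(\overline{\partial_m: \mf g^0/\mf h^0 \to \mf g^1/\mf h^1}) = \ker(\partial_m: \mf g^0 \to \mf g^1)$, which under $\mf g^0 = \End(V)$ is exactly the space of Hochschild $1$-cocycles, i.e. the derivations $\Der(V,m)$.

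Finally I would unwind the gauge action to obtain the concrete isomorphism statement. The defining equation \eqref{eq:gaugeeq} for the gauge flow of $A \in \mf g^0$ in the twist, applied to $\epsilon$, coincides with the gauge flow of $A$ in the untwisted DGLA applied to $m' = m + \epsilon$; hence, using the explicit formula for the gauge action recorded in the lemma for associative algebras, the conclusion $\epsilon^{A} = 0$ is equivalent to $(m')^{A} = \exp(A)\circ m'(\exp(-A)-, \exp(-A)-) = m$, which says exactly that $\exp(A)$ is an algebra isomorphism $(V,m') \to (V,m)$. Theorem \ref{thm:mainthm} then provides, for every $m'$ sufficiently close to $m$, such an $A$ drawn from a family smoothly parametrized by a neighborhood of $0 \in \Der(V,m)$; in particular every nearby associative structure is isomorphic to $m$, which is rigidity, and the ``moreover'' clause follows. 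I expect the only genuine work to be bookkeeping: pinning down the signs in the Hochschild differential so that $\ker(\partial_m)$ restricted to $\mf g^0$ really is $\Der(V,m)$, and checking that the twisted and untwisted gauge flows agree so that the abstract gauge-triviality $\epsilon^{A} = 0$ translates faithfully into the intertwining identity for $\exp(A)$. Neither point is conceptually difficult once the twisting device is in place.
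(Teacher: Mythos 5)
Your proposal is correct and matches the paper's (implicit) argument: the paper likewise reduces Theorem \ref{thm:assrigid} to Theorem \ref{thm:mainthm} with $\mf h = 0$ for the $m$-twisted differential graded Lie algebra $((V^\ast)^{\otimes(\bullet+1)}\otimes V, [m,-]_G, [-,-]_G)$, using the two stated lemmas to identify the twisted differential with the Hochschild differential (so that $H^1(\mf g,[m,-]_G)\cong H^2_H(V,V)$ and $\ker([m,-]_G)\cap\mf g^0 = \mathrm{Der}(V,m)$) and the gauge action with conjugation by $\exp(A)$. Your explicit verification that the twisted gauge flow of $\epsilon$ agrees with the untwisted flow of $m+\epsilon$ is exactly the bookkeeping the paper leaves to the reader.
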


\subsubsection{Unital algebras}\label{sssec:unitvnon}
We now turn our attention to the unital case. However, instead of giving a direct approach to the problem, we use Theorem \ref{thm:mainthm} to show that the natural map 
\begin{equation}\label{eq:inclunital}
\frac{\{\text{Associative algebra structures on $V$ with unit $1\in V$}\}}{\{\text{Linear isomorphisms preserving $1\in V   $}\}_0} \to \frac{\{\text{Associative algebra structures on $V$}\}}{\{\text{Linear isomorphisms}\}_0}
\end{equation}
induces a local bijection, where the subscript $0$ denotes the connected component of the identity.

Let $(V,m)$ be an associative algebra with unit $1\in V$. We show that the left hand side can be modelled by a differential graded Lie subalgebra of $(\mf g^\bullet =(V^\ast)^{\otimes(\bullet+1)} \otimes V,[m,-]_G,[-,-]_G)$.
\begin{lem}
    Let $\mf h^\bullet :=  ((V/\mb K1)^\ast)^{\otimes (\bullet+1)} \otimes V$, where $\mb K\in \{\mb R, \mb C\}$. Then $\mf h^\bullet$ is a dg-Lie subalgebra of $\mf g^\bullet$, using the pullback map along $p:V \to V/\mb K1$, with Maurer-Cartan elements 
    $$
    m':V\otimes V \to V
    $$
    such that
    \begin{itemize}
        \item[i)] $m+m':V\otimes V \to V$ is an associative algebra structure,
        \item[ii)] $m'(1,-) = m'(-,1) = 0$, that is, $1\in V$ is a unit for $m+m'$.
    \end{itemize}
\end{lem}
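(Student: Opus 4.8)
The plan is to identify $\mf h^\bullet$, via the injection $p^\ast$, with the \emph{normalized} (reduced) Hochschild cochains sitting inside $\mf g^\bullet$, and then to invoke the standard facts that these are closed under both the Gerstenhaber bracket and the Hochschild differential. Concretely, since $p^\ast\colon (V/\K 1)^\ast\to V^\ast$ is injective with image the annihilator of $\K 1$, the induced inclusion $\mf h^k=((V/\K 1)^\ast)^{\otimes(k+1)}\otimes V\hookrightarrow (V^\ast)^{\otimes(k+1)}\otimes V=\mf g^k$ identifies $\mf h^k$ with the subspace of those $\varphi\colon V^{\otimes(k+1)}\to V$ that vanish whenever one of the $k+1$ arguments equals $1$. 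I would then verify, in turn, that this subspace is a Lie subalgebra for $[-,-]_G$, that it is preserved by the differential $[m,-]_G$, and finally read off the Maurer--Cartan elements.

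For closure under the bracket, I would use the description of $[-,-]_G$ through the Gerstenhaber circle product $\varphi\circ\psi$, which inserts $\psi$ into each argument slot of $\varphi$ and sums. If $\varphi,\psi\in\mf h$ and we evaluate $\varphi\circ\psi$ on a tuple one of whose entries is $1$, then in each summand that entry is either a direct argument of $\varphi$---forcing the summand to vanish since $\varphi\in\mf h$---or an argument of the inner $\psi$, in which case $\psi$ returns $0$ and hence $\varphi$ vanishes by linearity in that slot. Thus $\varphi\circ\psi\in\mf h$, and likewise $\psi\circ\varphi\in\mf h$, so $[\varphi,\psi]_G\in\mf h$.

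The one genuine computation, and the step I expect to be the main obstacle, is closure under the differential $[m,-]_G$: here $m\notin\mf h$, since $m(1,-)=\mathrm{id}\neq 0$, so the argument above does not apply and one must use the explicit Hochschild differential. Writing $d\eta=[m,\eta]_G$ as the usual alternating sum of an outer left-multiplication term, the inner ``multiply adjacent arguments'' terms, and an outer right-multiplication term, I would fix $\eta\in\mf h^k$ and set one argument $x_j=1$. Every summand in which $1$ remains a standalone argument of $\eta$ drops out; the surviving summands are exactly those merging $x_j=1$ with a neighbour (or multiplying it on the outside), and there the relation $1\cdot x=x=x\cdot 1$ makes them cancel in consecutive $\pm$ pairs---separately handling the boundary indices $j=0$ and $j=k+1$, where the outer multiplication term pairs with the first, resp. last, merging term. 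This is precisely the classical fact that the reduced cochains form a subcomplex; carrying out the sign bookkeeping is the only delicate point.

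Finally, for the Maurer--Cartan elements I would exploit the twisting. Since $m$ is an associative (hence Maurer--Cartan) element of $(\mf g,0,[-,-]_G)$, an element $m'\in\mf g^1$ satisfies the twisted equation $[m,m']_G+\tfrac12[m',m']_G=0$ if and only if $[m+m',m+m']_G=0$, i.e. if and only if $m+m'$ is associative; restricting to $\mf h$ imposes exactly the same equation on $m'\in\mf h^1$, which gives (i). Membership $m'\in\mf h^1$ means $m'(1,-)=m'(-,1)=0$, which is equivalent to $(m+m')(1,y)=y+m'(1,y)=y$ and $(m+m')(x,1)=x$, i.e. to $1$ being a unit for $m+m'$; this is (ii). Combining the two yields the claimed description of the Maurer--Cartan elements of $\mf h$.
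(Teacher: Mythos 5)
Your proof is correct and follows the route the paper intends: the paper states this lemma without proof, but its argument for the subsequent lemma reveals that $\mf h^\bullet$ is to be identified with the normalized Hochschild complex, which is exactly your identification via the annihilator of $\mb K 1$, and your closure checks (the circle-product case analysis for the bracket, and the pairwise cancellation using $1\cdot x = x = x\cdot 1$ for the differential $[m,-]_G$) are the standard verifications of that fact. The Maurer--Cartan description via twisting by the associative element $m$ is likewise the intended computation, so nothing is missing.
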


Next, we show that $H^0(\mf g/\mf h,[m,-]_G)=\ker(\overline{[m,-]_G:\mf g^0/\mf h^0 \to \mf g^1/\mf h^1}) = 0 = H^1(\mf g/\mf h, \overline{[m,-]_G})$, independent of $m$. It then follows from Theorem \ref{thm:mainthm} that the map \eqref{eq:inclunital} is a local bijection.
\begin{lem}
    The spaces $K:=\ker(\overline{[m,-]_G:\mf g^0/\mf h^0 \to \mf g^1/\mf h^1})$ and $H^1(\mf g/\mf h,\overline{[m,-]_G})$ vanish for an associative algebra $(V,m)$ with unit $1\in V$.
\end{lem}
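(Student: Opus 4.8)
The plan is to compute the two spaces directly, since the low-degree pieces of the quotient complex $\mf g/\mf h$ admit a transparent description. The key observation is that a cochain $\phi \in \mf g^k = \Hom(V^{\otimes(k+1)}, V)$ lies in $\mf h^k$ precisely when it vanishes as soon as one of its arguments equals $1$; consequently the quotient $\mf g^k/\mf h^k$ is detected by the evaluations of $\phi$ on tuples in which at least one entry is $1$. First I would record the identifications $\mf g^0/\mf h^0 \cong V$ via $\phi \mapsto \phi(1)$, and $\mf g^1/\mf h^1 \cong \{(f,g) \in \End(V)^2 : f(1) = g(1)\}$ via $\psi \mapsto (\psi(1,-), \psi(-,1))$, together with the analogous description of $\mf g^2/\mf h^2$ by the three ``one entry equal to $1$'' evaluations.

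Next I would feed these into the Hochschild differential $[m,-]_G$. Writing $L_c := m(c,-)$ and $R_c := m(-,c)$ for left and right multiplication, a short computation using unitality gives the induced degree-$0$ differential $\overline{[m,-]_G}(w) = (L_w, R_w)$ for $w \in V \cong \mf g^0/\mf h^0$. Vanishing of $K$ is then immediate: if $L_w = R_w = 0$ then $w = w\cdot 1 = 0$, so $K = 0$.

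For $H^1$, I would compute the three components of $\overline{[m,-]_G}(f,g) \in \mf g^2/\mf h^2$ obtained by setting each argument in turn to $1$; using unitality these reduce to $f(vw) - f(v)w$, $vf(w) - g(v)w$, and $vg(w) - g(vw)$. The cocycle condition is the simultaneous vanishing of all three. Setting $v = 1$ in the first and $w = 1$ in the third, and using $f(1) = g(1) =: c$, forces $f = L_c$ and $g = R_c$; associativity of $m$ then makes the remaining equation automatic. Hence the cocycles are exactly $\{(L_c, R_c) : c \in V\}$, which is precisely $\im\big(\overline{[m,-]_G}: \mf g^0/\mf h^0 \to \mf g^1/\mf h^1\big)$. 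Therefore $H^1(\mf g/\mf h, \overline{[m,-]_G}) = 0$.

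The only real obstacle is the bookkeeping: one must carefully justify that being a cocycle in the quotient means the Hochschild differential of a representative lands in $\mf h$, i.e.\ vanishes whenever an argument is $1$, and keep track of the compatibility conditions on the evaluation data. Conceptually nothing surprising is happening: $\mf h$ and $\mf g$ are (shifts of) the normalized and full Hochschild cochain complexes of $(V,m)$, whose inclusion is the classical normalization quasi-isomorphism, so the quotient $\mf g/\mf h$ is acyclic and \emph{all} its cohomology vanishes. The computation above simply verifies this by hand in the two degrees we need, and isolates unitality and associativity of $m$ as exactly the facts that make everything collapse.
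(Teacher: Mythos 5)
Your proof is correct, but it takes a genuinely different route from the paper's. The paper's proof is two lines and purely conceptual: it observes that $(\mf h^\bullet,[m,-]_G)$ is the \emph{normalized} Hochschild complex of $(V,m)$, invokes the classical fact that the inclusion of the normalized complex into the full Hochschild complex is a quasi-isomorphism, and concludes via the long exact sequence of $\mf h \hookrightarrow \mf g \twoheadrightarrow \mf g/\mf h$ that the quotient is acyclic in \emph{every} degree, in particular in the two degrees the lemma needs. You instead verify the two vanishings by hand: identifying $\mf g^0/\mf h^0 \cong V$ and $\mf g^1/\mf h^1$ with evaluation data at $1$, computing the induced differential $w \mapsto (L_w,R_w)$ in degree $0$ and the three ``set one argument to $1$'' components in degree $1$, and showing that unitality forces a degree-$1$ cocycle to be $(L_c,R_c)$ while associativity makes the remaining equation automatic; these computations check out. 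The paper's route buys brevity and the stronger conclusion of acyclicity in all degrees at the cost of citing the normalization theorem; your route is self-contained and elementary, uses only the degrees actually required, and makes transparent exactly where unitality and associativity enter. (One small remark: for your argument you only need that the evaluation-at-$1$ maps are \emph{injective} on the quotients, which is immediate from the description of $\mf h^k$ as the cochains vanishing whenever an argument equals $1$; surjectivity onto the compatible tuples is not needed.) Your closing paragraph shows you recognize the conceptual argument, which is precisely the one the paper records.
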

\begin{proof}
    Note that $(\mf h^\bullet,[m,-]_G)$ is the \emph{normalized} Hochschild complex and the natural map from normalized Hochschild cohomology into Hochschild cohomology is a quasi-isomorphism (see e.g. \cite{lodaybook}). Using the long exact sequence in cohomology associated to the short exact sequence $$ \mf h^\bullet \hookrightarrow \mf g^\bullet \twoheadrightarrow \mf g/\mf h$$ implies that $(\mf g/\mf h,\overline{[m,-]_G})$ is acyclic.
\end{proof}
\begin{rmk}
    Note that this is an incarnation of \cite[Theorem 2.4]{goldmanmillson}, which states that quasi-isomorphic differential graded Lie algebras encode equivalent deformation theories. The difference however, is that our formulation is local rather than formal.
\end{rmk}
Summarizing, Theorem \ref{thm:mainthm} applied to $(\mf g^\bullet =  (V^\ast)^{\otimes(\bullet+1)} \otimes V, 0,[-,-]_G) $, $\mf h^\bullet= ((V/\mb K1)^\ast)^{\otimes(\bullet+1)} \otimes V $ and the Maurer-Cartan element $m$ yields:
\begin{thm}\label{thm:unitvsnon}
    Let $(V,m,1)$ be a unital associative algebra. Then any associative algebra structure $m'$ on $V$ near $m$ is unital. 
\end{thm}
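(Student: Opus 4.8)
The plan is to assemble the three preceding lemmas into a single application of Theorem~\ref{thm:mainthm} and then translate its gauge-theoretic conclusion into a statement about units. First I would fix the data: the degree-wise finite-dimensional graded Lie algebra $(\mf g^\bullet,0,[-,-]_G)$, whose Maurer--Cartan elements are exactly the associative structures on $V$, together with the subalgebra $\mf h^\bullet=((V/\mb K 1)^\ast)^{\otimes(\bullet+1)}\otimes V$. Twisting the zero differential by the given unital structure $m$ promotes $m$ to the distinguished Maurer--Cartan element and produces the differential $[m,-]_G$; in this twisted picture $0\in\mf h^1$ is a Maurer--Cartan element of $\mf h$, and by the lemma identifying $\mf h$, the Maurer--Cartan elements of $\mf h$ near $0$ are precisely those perturbations $\eta$ for which $m+\eta$ is associative and has $1$ as a two-sided unit.

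Next I would feed in the cohomological input. The preceding lemma shows $H^1(\mf g/\mf h,\overline{[m,-]_G})=0$, which is exactly hypothesis~\eqref{eq:cohomcond} of Theorem~\ref{thm:mainthm} for the twisted differential $\partial+[Q,-]=[m,-]_G$ with $Q=0$; degree-wise finite-dimensionality guarantees Assumptions~\ref{ass:mainthm}a)--c) for any splittings. Theorem~\ref{thm:mainthm} then applies and gives the following: for any associative structure $m'$ on $V$ near $m$, the perturbation $m'-m$ is a Maurer--Cartan element of the twisted algebra near $0$, hence is gauge equivalent to an element of $\mf h^1$ --- that is, $m'$ is gauge equivalent to an associative structure $m''$ for which $1$ is a unit.

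The last step is to convert this gauge equivalence into an isomorphism and transport the unit. Gauge equivalence of Maurer--Cartan elements here is implemented by the $GL(V)$-action $\mu\mapsto\exp(A)\circ\mu(\exp(-A)-,\exp(-A)-)$, so the conclusion reads: $m'$ is isomorphic, via some $g=\exp(A)\in GL(V)$, to a product $m''$ admitting $1$ as a unit. The one point that must be made explicit is that unitality is an isomorphism-invariant: if $g\colon (V,m')\to(V,m'')$ is an algebra isomorphism and $1$ is a unit for $m''$, then $g^{-1}(1)$ is a unit for $m'$, since $g\big(m'(g^{-1}(1),x)\big)=m''(1,gx)=gx$ and symmetrically on the right. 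Therefore $m'$ is itself unital, with unit $g^{-1}(1)$, which is the assertion of the theorem. I expect this final transfer to be the only delicate point: Theorem~\ref{thm:mainthm} yields unitality with respect to the fixed vector $1$ only after a gauge transformation, so the claim ``$m'$ is unital'' must be read as the existence of some unit --- namely the gauge pullback of $1$ --- rather than as $1$ persisting as a unit for the deformed product.
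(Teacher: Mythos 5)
Your proposal is correct and follows essentially the same route as the paper: the same dg Lie algebra $(\mf g^\bullet,0,[-,-]_G)$ and subalgebra $\mf h^\bullet=((V/\mb K 1)^\ast)^{\otimes(\bullet+1)}\otimes V$, the same cohomological input $H^1(\mf g/\mf h,\overline{[m,-]_G})=0$ fed into Theorem~\ref{thm:mainthm}, and the same reading of the conclusion. Your final paragraph, transporting the unit along the gauge transformation to $\exp(-A)(1)$, makes explicit a step the paper leaves implicit, and is the right interpretation of the statement.
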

\subsection{Rigidity of Lie algebra morphisms}\label{sec:rigmorphism}
Let $(V,\mu)$ and $(W,\nu)$ be Lie algebras. A Lie algebra morphism $f:(V,\mu) \to (W,\nu)$ is a linear map $f:V\to W$ such that
$$
f(\mu(x,y)) = \nu(f(x),f(y))
$$
for $x,y\in V$. 

There is a differential graded Lie algebra $(\mf g,\partial,[-,-])$ such that its Maurer-Cartan elements are the Lie algebra morphisms $(V,\mu)\to (W,\nu)$.
\begin{lem}[\cite{morphdgla}]
    Let $(V,\mu)$ and $(W,\nu)$ be Lie algebras. The graded vector space $\mf g^\bullet := \wedge^\bullet V^\ast \otimes W$ carries a differential graded Lie algebra structure: the differential is $\partial := d_{CE}$, the Chevalley-Eilenberg differential of $V$, where $W$ is viewed as the trivial $V$-module, and the bracket $[-,-]:= [-,-]_\nu$ is the $\wedge^\bullet V^\ast$-linear extension of $\nu$.

    An element $f\in \mf g^1= V^\ast \otimes W$ is Maurer-Cartan, i.e. $d_{CE}(f) + \frac{1}{2}[f,f]_\nu = 0$, if and only if $f:(V,\mu) \to (W,\nu)$ is a Lie algebra morphism. Moreover, for $X\in \mf g^0 = W$, $f\in \mf g^1 $, we have 
    $$
    f^X = \exp(-\text{ad}_X)\circ f.
    $$
\end{lem}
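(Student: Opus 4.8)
The plan is to recognize $\mf g^\bullet = \wedge^\bullet V^\ast\otimes W$ as the tensor product of the commutative differential graded algebra $(\wedge^\bullet V^\ast,\wedge, d_{CE})$ — the Chevalley--Eilenberg algebra of $(V,\mu)$ with coefficients in the trivial module $\mb K$ — with the Lie algebra $(W,\nu)$, and to read off the three pieces of structure from this description. Since $W$ is a \emph{trivial} $V$-module, the $\rho$-terms in the Chevalley--Eilenberg formula vanish, so the differential $\partial = d_{CE}$ acts only on the form-factor, while the bracket multiplies form-factors by $\wedge$ and $W$-factors by $\nu$, i.e. on decomposables
$$[\alpha\otimes a,\beta\otimes b]_\nu = (\alpha\wedge\beta)\otimes\nu(a,b).$$

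First I would verify the dgla axioms. The relation $\partial^2=0$ is the standard fact that $d_{CE}$ squares to zero, equivalent to the Jacobi identity for $\mu$. The graded skew-symmetry and graded Jacobi identity for $[-,-]_\nu$ follow from the graded-commutativity of $\wedge$ together with the skew-symmetry and Jacobi identity of $\nu$: on decomposables these reduce to sign bookkeeping on the form-factors and to the $\nu$-identities on the $W$-factors, with no Koszul signs produced by the $W$-entries since $a,b\in W$ carry degree $0$. The graded Leibniz rule is the assertion that $d_{CE}$ is a graded derivation of $\wedge^\bullet V^\ast$ (so that $(\wedge^\bullet V^\ast,\wedge,d_{CE})$ is a CDGA), extended $W$-linearly; because $\partial$ ignores the $W$-factor and $\nu$ is independent of the forms, this is immediate once the derivation property on $\wedge^\bullet V^\ast$ is recorded.

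Next I would check the Maurer--Cartan characterization by evaluating both terms on a pair $x,y\in V$. For $f\in\mf g^1 = V^\ast\otimes W = \Hom(V,W)$, the Chevalley--Eilenberg formula with trivial coefficients gives $d_{CE}(f)(x,y) = \pm f(\mu(x,y))$, while writing $f=\sum_i\alpha_i\otimes a_i$ and using $(\alpha_i\wedge\alpha_j)(x,y)=\alpha_i(x)\alpha_j(y)-\alpha_i(y)\alpha_j(x)$ together with the skew-symmetry of $\nu$ gives $\tfrac12[f,f]_\nu(x,y)=\nu(f(x),f(y))$. Hence the Maurer--Cartan equation holds exactly when $f(\mu(x,y))=\nu(f(x),f(y))$ for all $x,y$, i.e. when $f$ is a Lie algebra morphism. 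Finally, for the gauge action I would solve \eqref{eq:gaugeeq}: triviality of the module forces $\partial X = d_{CE}(X)=0$ for $X\in\mf g^0=W$, and the computation $[X,f_t]_\nu=\sum_i\alpha_i\otimes\nu(X,a_i)=\ad_X\circ f_t$ turns the equation into the linear ODE $\tfrac{d}{dt}f_t=-\ad_X\circ f_t$ on $\Hom(V,W)$, whose time-$1$ solution is $f^X=\exp(-\ad_X)\circ f$.

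The main obstacle is the sign bookkeeping: matching the Maurer--Cartan equation to the morphism condition on the nose depends on the precise Chevalley--Eilenberg sign convention, and the same care is needed in verifying the graded Jacobi and graded Leibniz identities on decomposables. The conceptual content — that $\partial$ acts only through the CDGA differential on $\wedge^\bullet V^\ast$ and the bracket only through $\nu$ on $W$ — is what makes all three verifications routine once the signs are fixed.
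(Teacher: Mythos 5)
Your proposal is correct; the paper states this lemma with a citation to the literature and gives no proof of its own, and your verification (viewing $\wedge^\bullet V^\ast \otimes W$ as the tensor product of the Chevalley--Eilenberg CDGA of $V$ with the Lie algebra $(W,\nu)$, then computing on decomposables) is the standard argument. The one sign you leave as $\pm$ does matter --- the Maurer--Cartan equation matches the morphism condition only if $d_{CE}(f)(x,y) = -f(\mu(x,y))$ --- but with the paper's explicitly stated Chevalley--Eilenberg convention and the trivial action the $\rho$-terms drop and the remaining term is indeed $-f(\mu(x,y))$, so the argument closes.
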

The gauge equivalences are therefore given by inner automorphisms of $W$. Now, given a morphism $f:(V,\mu)\to (W,\nu)$ of Lie algebras, we obtain the differential $d_{CE} + [f,-]_\nu$ on $\mf g$, which we can describe explicitly.
\begin{lem}
The differential $d_{CE} +[f,-]_\nu$ on $\mf g$ is equal to the Chevalley-Eilenberg differential of $(V,m)$, with values in $W$, where $W$ is seen as $(V,\mu)$-module through $f$. 
\end{lem}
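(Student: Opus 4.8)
The plan is to prove the identity by evaluating both differentials on an arbitrary homogeneous cochain and matching the resulting multilinear maps term by term. Write $\rho:V\to\End(W)$ for the representation $\rho(x)=\ad_{f(x)}=\nu(f(x),-)$, which is exactly the module structure on $W$ induced by $f$. Since $f$ is a Lie algebra morphism and $\ad:(W,\nu)\to\gl(W)$ is a homomorphism, the composite $\rho=\ad\circ f$ is genuinely a representation of $(V,\mu)$, so the right-hand side $d^\rho_{CE}$ is a well-defined Chevalley-Eilenberg differential. Recalling the definition of $C^\bullet_{CE}$, the differential $d^\rho_{CE}$ splits as the sum of an ``action part'' $\sum_{i}(-1)^i\rho(x_i)(\alpha(\dots,\widehat{x_i},\dots))$ and a ``bracket part'' $-\sum_{i<j}(-1)^{i+j}\alpha(\mu(x_i,x_j),\dots)$. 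The strategy is then to identify $\partial=d_{CE}$ (with $W$ the trivial module) with the bracket part, and the twist $[f,-]_\nu$ with the action part.

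The first identification is immediate: when $W$ carries the trivial action, the action part of $d^{\mathrm{triv}}_{CE}$ vanishes, leaving precisely the bracket part, which is independent of any module structure and depends only on $\mu$. Hence $\partial=d_{CE}$ contributes exactly $-\sum_{i<j}(-1)^{i+j}\alpha(\mu(x_i,x_j),\dots)$.

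The second identification is the computational heart. First I would reduce to a decomposable cochain $\alpha=\eta\otimes w$ with $\eta\in\wedge^k V^\ast$ and $w\in W$, and write $f=\sum_a \xi_a\otimes e_a$ with $\xi_a\in V^\ast$, $e_a\in W$, so that $f(x)=\sum_a\xi_a(x)e_a$. Using the $\wedge^\bullet V^\ast$-linearity of the bracket, $[f,\alpha]_\nu=\sum_a(\xi_a\wedge\eta)\otimes\nu(e_a,w)$. Evaluating on $x_0,\dots,x_k$ and expanding the wedge via $(\xi_a\wedge\eta)(x_0,\dots,x_k)=\sum_i(-1)^i\xi_a(x_i)\eta(x_0,\dots,\widehat{x_i},\dots,x_k)$, one collects the $a$-sum inside $\nu$ by bilinearity, $\sum_a\xi_a(x_i)\nu(e_a,w)=\nu(f(x_i),w)$, obtaining $[f,\alpha]_\nu(x_0,\dots,x_k)=\sum_i(-1)^i\nu(f(x_i),\alpha(x_0,\dots,\widehat{x_i},\dots,x_k))$. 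This is precisely the action part $\sum_i(-1)^i\rho(x_i)(\alpha(\dots))$.

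Adding the two contributions reconstructs $d^\rho_{CE}(\alpha)$, and since everything is linear in $\alpha$ the identity extends from decomposables to all of $\mf g$. The main obstacle I anticipate is purely bookkeeping: reconciling the Koszul signs from the graded-skew bracket $[-,-]_\nu$ and from the wedge expansion with the $(-1)^i$ and $(-1)^{i+j}$ signs in the Chevalley-Eilenberg formula, and confirming that the twisting convention $\partial+[Q,-]$ produces the action term with the correct overall sign. Once these conventions are fixed consistently with the stated formula for $[-,-]_\nu$, no genuine difficulty remains.
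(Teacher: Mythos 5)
The paper states this lemma without proof, and your direct term-by-term verification --- splitting $d^\rho_{CE}$ into its bracket part (which is $d_{CE}$ for the trivial module) and its action part (which you correctly identify with $[f,-]_\nu$ via the wedge expansion of $\xi_a\wedge\eta$) --- is exactly the routine computation the paper intends. Your argument is correct, including the observation that $\rho=\ad_\nu\circ f$ is a genuine representation because $f$ is a morphism, so nothing further is needed.
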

Consequently, we obtain:
\begin{thm}[\cite{cohomgradlie}] \label{thm:rigmorph}
    Let $f:(V,\mu)\to (W,\nu)$ be a morphism of Lie algebras. If
    $$
    H^1_{CE}(V,W) = 0,
    $$
    where $W$ is seen as $(V,\mu)$-module through $f$, then $f$ is rigid. Moreover, for $f'$ near $f$, the elements $w\in W$ such that $\exp(-\text{ad}_w) \circ f' = f$ are smoothly parameterized by an open neighborhood of $0$ in the centralizer of $f(V)\subset W$.
\end{thm}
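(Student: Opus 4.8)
The plan is to apply Theorem~\ref{thm:mainthm} to the differential graded Lie algebra $(\mf g^\bullet = \wedge^\bullet V^\ast \otimes W, \partial^f, [-,-]_\nu)$, where $\partial^f := d_{CE} + [f,-]_\nu$ is the differential induced by the Maurer-Cartan element $f$, together with the trivial subalgebra $\mf h = 0$ and the Maurer-Cartan element $Q = 0$; this is the setup corresponding to rigidity, as explained in Section~\ref{ssec:rigidlieass}. Since $V$ and $W$ are finite-dimensional, $\mf g^k = \wedge^k V^\ast \otimes W$ is finite-dimensional and vanishes for $k > \dim V$, so $\mf g$ is degree-wise finite-dimensional and Assumptions~\ref{ass:mainthm} hold by the Lemma following Theorem~\ref{thm:mainthm}. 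By the preceding Lemma, $\partial^f$ is the Chevalley-Eilenberg differential of $(V,\mu)$ with values in $W$, viewed as a $V$-module through $f$; as $\mf g^\bullet = \wedge^\bullet V^\ast \otimes W$ carries no degree shift, this gives $H^1(\mf g/\mf h, \overline{\partial^f}) = H^1(\mf g,\partial^f) = H^1_{CE}(V,W)$, which vanishes by hypothesis, so the cohomological condition~\eqref{eq:cohomcond} is satisfied.

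First I would record the standard twisting bookkeeping relating $(\mf g, \partial^f, [-,-]_\nu)$ to the original dgla $(\mf g, d_{CE}, [-,-]_\nu)$ of Lie algebra morphisms. A degree-one element $Q'$ is Maurer-Cartan for $\partial^f$ if and only if $f + Q'$ is a Lie algebra morphism, and the gauge flow of $Q'$ in the twisted dgla coincides with the gauge flow of $f + Q'$ in the original one; the shift by the constant $f$ is absorbed using $[f,X]_\nu = -[X,f]_\nu$ for $X \in \mf g^0$, which follows from graded antisymmetry in degrees $(1,0)$. Consequently, Maurer-Cartan elements of $\partial^f$ near $0$ correspond to Lie algebra morphisms $f'$ near $f$ via $Q' = f' - f$, and gauge-triviality of $Q'$ is equivalent to gauge-equivalence of $f'$ to $f$.

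With this dictionary in place, Theorem~\ref{thm:mainthm} (with $U = \mf g^0 = W$) produces, for every morphism $f'$ sufficiently near $f$, an element $v \in W$ with $(f'-f)^v = 0$ in the twisted dgla, equivalently $(f')^v = f$ in the original dgla. Since $f^X = \exp(-\ad_X)\circ f$ for $X \in \mf g^0 = W$, this reads $\exp(-\ad_v)\circ f' = f$, which is precisely rigidity of $f$. For the final sentence I would identify the parameter space: Theorem~\ref{thm:mainthm} parametrizes the solutions by a neighborhood of $0$ in $\ker(\overline{\partial^f : \mf g^0/\mf h^0 \to \mf g^1/\mf h^1}) = \ker(\partial^f : W \to V^\ast \otimes W)$. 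On $\mf g^0 = W$ the differential acts by $\partial^f(w)(x) = \nu(f(x),w)$, so its kernel is $\{w \in W : \nu(f(x),w) = 0 \text{ for all } x \in V\}$, the centralizer of $f(V)$ in $W$, as claimed.

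The main obstacle I anticipate is purely bookkeeping rather than conceptual: verifying carefully the compatibility of the twisting with both the Maurer-Cartan equation and the gauge action, including the sign conventions, and matching the cohomological degrees. The one point requiring genuine attention is that, unlike the Lie- and associative-algebra cases, the complex $\mf g^\bullet = \wedge^\bullet V^\ast \otimes W$ is \emph{unshifted}, so the relevant obstruction sits in $H^1_{CE}(V,W)$ and the parameter space in the degree-zero kernel $\ker(\partial^f|_{\mf g^0})$; once these identifications are pinned down, the statement is an immediate consequence of Theorem~\ref{thm:mainthm}.
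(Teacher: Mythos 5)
Your proposal is correct and follows the paper's intended argument: twist the morphism dgla $(\wedge^\bullet V^\ast\otimes W, d_{CE}, [-,-]_\nu)$ by the Maurer--Cartan element $f$, apply Theorem~\ref{thm:mainthm} with $\mf h = 0$, and identify the twisted differential with $d_{CE}^\rho$ for the $V$-module structure on $W$ induced by $f$ (so the obstruction is the unshifted $H^1_{CE}(V,W)$ and the parameter space is the degree-zero kernel, i.e.\ the centralizer of $f(V)$). The only difference is that you spell out the twisting and gauge-flow bookkeeping that the paper leaves implicit, which is a virtue rather than a deviation.
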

\subsection{Stability of Lie subalgebras}\label{ssec:stabsubalg}
In this section, we discuss the stability question for Lie subalgebras $W\subset V$: 

\begin{center} Given a Lie algebra $(V,\mu)$, and a subalgebra $W\subset V$, when do all Lie algebra structures $\mu' \in \wedge^2 V^\ast \otimes V$ near $\mu$ admit a subalgebra $W' \subset V$? 
\end{center}

This question was first addressed in \cite{stabsubalgmorph}.

We will follow the procedure as described in Section \ref{sec:stability} to identify the ingredients for Theorem \ref{thm:mainthm}. Let $(V,\mu)$ be a Lie algebra, and consider the graded Lie algebra
$$
(\mf g^\bullet:= \wedge^{\bullet+1} V^\ast \otimes V, 0,[-,-]_{NR}).
$$
As described before, Maurer-Cartan elements of $\mf g$ are precisely $\mu' \in \wedge^2 V^\ast \otimes V$, satisfying the Jacobi identity, i.e. Lie algebra structures on $V$. Now let $W\subset V$ be a Lie subalgebra with respect to $\mu$, i.e. $\mu(W,W) \subset W$. In order to define the graded Lie algebra which controls the deformations of $\mu$ such that $W$ is a subalgebra, we observe that $\mu' \in \wedge^2 V^\ast \otimes V$ preserves $W\subset V$ if and only if $\mu'$ lies in the kernel of the natural map
$$
\wedge^2 V^\ast \otimes V \to \wedge^2 W^\ast \otimes V/W, \,\,\, \mu \mapsto \left. \mu\right|_{\wedge^2 W} \text{ mod } W.
$$
This condition can easily be generalized to the other degrees, so we define 
\begin{equation}\label{eq:subalgstabsubalg}
\mf h^i_{W}:= \ker(\wedge^{i+1} V^ \ast \otimes V \to \wedge^{i+1}W^\ast \otimes V/W)
\end{equation}
Then $\mf h_W^\bullet$ is a graded Lie subalgebra, as can be checked by direct computation.
\begin{lem}
    The graded subspace $\mf h_W^\bullet \subset \mf g^\bullet$ is a graded Lie subalgebra of $\mf g^\bullet$. 
\end{lem}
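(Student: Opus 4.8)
The plan is to work with the intrinsic description of $\mf h_W^\bullet$ as the multilinear maps that preserve $W$, and then to read off closure under the bracket from an evaluation formula for $[-,-]_{NR}$.

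First I would unwind the definition \eqref{eq:subalgstabsubalg}. Identifying $\wedge^{i+1}V^\ast \otimes V \cong \Hom(\wedge^{i+1}V, V)$, the natural map sends $\phi$ to the composite of $\phi|_{\wedge^{i+1}W}$ with the projection $V \to V/W$; this map is the tensor product $r \otimes q$ of the surjective restriction $r:\wedge^{i+1}V^\ast \to \wedge^{i+1}W^\ast$ with the projection $q:V \to V/W$. Since $\ker(r\otimes q)=\ker r\otimes V + \wedge^{i+1}V^\ast\otimes\ker q$ for surjections over a field, its kernel is exactly
$$
\mf h^i_W = \big\{\phi \in \Hom(\wedge^{i+1}V, V) : \phi(w_0, \dots, w_i) \in W \text{ for all } w_0, \dots, w_i \in W\big\},
$$
i.e. the degree-$i$ cochains restricting to maps $\wedge^{i+1}W \to W$.

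Next I would put the Nijenhuis--Richardson bracket in evaluation form. For $\phi\in\mf h^k_W$ and $\psi\in\mf h^l_W$ one writes it as the graded commutator $[\phi,\psi]_{NR}=\phi\,\bar\circ\,\psi-(-1)^{kl}\,\psi\,\bar\circ\,\phi$ of the insertion product
$$
(\phi\,\bar\circ\,\psi)(x_0,\dots,x_{k+l}) = \sum_{\sigma} \operatorname{sgn}(\sigma)\,\phi\big(\psi(x_{\sigma(0)},\dots,x_{\sigma(l)}),\,x_{\sigma(l+1)},\dots,x_{\sigma(k+l)}\big),
$$
the sum being over $(l+1,k)$-shuffles $\sigma$. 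That this matches the defining formula $\beta\wedge\iota_w(\alpha)\otimes v-(-1)^{kl}\alpha\wedge\iota_v(\beta)\otimes w$ is a direct check: for simple tensors one uses $(\iota_w\alpha)(x_1,\dots,x_k)=\alpha(w,x_1,\dots,x_k)$ and expands the wedge product as a shuffle sum, so that $\beta\wedge\iota_w(\alpha)\otimes v$ is precisely the insertion of $\beta\otimes w$ into the first slot of $\alpha\otimes v$.

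With these two reductions the closure is immediate. Evaluate $[\phi,\psi]_{NR}$ on inputs $w_0,\dots,w_{k+l}\in W$. In each summand of $\phi\,\bar\circ\,\psi$ the inner value $\psi(w_{\sigma(0)},\dots,w_{\sigma(l)})$ lies in $W$ because $\psi\in\mf h^l_W$, and then $\phi$ is applied to this $W$-element together with the remaining $w_{\sigma(j)}\in W$, landing in $W$ because $\phi\in\mf h^k_W$; the same holds for $\psi\,\bar\circ\,\phi$. Hence $[\phi,\psi]_{NR}$ maps $\wedge^{k+l+1}W$ into $W$, so $[\phi,\psi]_{NR}\in\mf h^{k+l}_W$. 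The only genuine work is the identification in the second step, and I expect the shuffle-sign bookkeeping there to be the main (indeed, the only) obstacle. If one prefers to avoid it, one can argue directly on simple tensors: by the kernel description above, $\mf h^i_W=\ker r\otimes V+\wedge^{i+1}V^\ast\otimes W$ is spanned by tensors $\alpha\otimes v$ with either $v\in W$ or $\alpha|_{\wedge^{i+1}W}=0$, and one checks the four resulting cases against the explicit formula; the evaluation-form argument is simply the cleaner packaging of this computation.
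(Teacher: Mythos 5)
Your proof is correct and is exactly the ``direct computation'' the paper alludes to but does not write out: you identify $\mf h^i_W$ with the cochains mapping $\wedge^{i+1}W$ into $W$ and verify closure of the Nijenhuis--Richardson bracket on such cochains via its insertion-product form. Nothing is missing; the shuffle-sign bookkeeping you flag is irrelevant to the closure argument, since each summand separately lands in $W$.
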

This puts us directly into the setting of Theorem \ref{thm:mainthm}: for the graded Lie algebra

$(\mf g^\bullet = \wedge^ {\bullet+1} V^\ast \otimes V,0,[-,-]_{NR})$, the graded Lie subalgebra $\mf h$ as in \eqref{eq:subalgstabsubalg} and the Maurer-Cartan element $Q_0 = \mu$, we obtain:
\begin{thm}
If $$H^1(\mf g/\mf h_W, \overline{[\mu,-]_{NR}}) = 0,$$ then for any Lie algebra structure $\mu'$ near $\mu$, there exists a family $I\subset \End(V)$, smoothly parametrized by an open neighborhood of $0\in \ker(\overline{[\mu,-]_{NR}}:\mf g^0/\mf h_W^0 \to \mf g^1/\mf h_W^1)$, such that for $A \in I$, we have $(\mu')^A \in \mf h^1_W$.  
\end{thm}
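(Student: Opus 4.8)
The plan is to apply Theorem~\ref{thm:mainthm} directly to the data assembled above: the graded Lie algebra $(\mf g^\bullet = \wedge^{\bullet+1} V^\ast \otimes V, 0, [-,-]_{NR})$, the graded Lie subalgebra $\mf h_W^\bullet$ of \eqref{eq:subalgstabsubalg}, and the Maurer-Cartan element $Q = \mu$. Since $V$ is finite-dimensional, $\mf g$ is degree-wise finite-dimensional, so by the Lemma establishing that Assumptions~\ref{ass:mainthm}a)--c) are automatic for degree-wise finite-dimensional differential graded Lie algebras, these hold for any choice of splittings $\sigma_i:\mf g^i/\mf h_W^i \to \mf g^i$, which exist because we are in finite dimensions; moreover $\mf h_W \subset \mf g$ is automatically closed and of degree-wise finite codimension. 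After fixing such splittings, the first thing I would verify is that $\mu$ is a Maurer-Cartan element of the \emph{subalgebra} $\mf h_W$, not merely of $\mf g$: the equation $[\mu,\mu]_{NR}=0$ is the Jacobi identity, while $\mu \in \mf h_W^1$ is, by the very definition of $\mf h_W^1$ as the kernel of $\wedge^2 V^\ast \otimes V \to \wedge^2 W^\ast \otimes V/W$, the statement that $\mu(W,W)\subset W$, i.e.\ that $W$ is a $\mu$-subalgebra.

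Next I would identify the relevant differential. Since the ambient differential of $\mf g$ is $\partial = 0$, twisting by $Q=\mu$ produces the differential $\partial + [\mu,-]_{NR} = [\mu,-]_{NR}$, which is the differential to which Theorem~\ref{thm:mainthm} refers once it is applied at the Maurer-Cartan element $\mu$. Consequently the cohomological hypothesis \eqref{eq:cohomcond} reads $H^1(\mf g/\mf h_W, \overline{[\mu,-]_{NR}}) = 0$, which is precisely the assumption of the present statement. For the same reason, the space parametrizing the family produced by Theorem~\ref{thm:mainthm}, namely the kernel of the induced twisted differential on $\mf g^0/\mf h_W^0 \to \mf g^1/\mf h_W^1$, specializes to $\ker(\overline{[\mu,-]_{NR}:\mf g^0/\mf h_W^0 \to \mf g^1/\mf h_W^1})$, exactly as claimed.

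With all hypotheses in place, invoking Theorem~\ref{thm:mainthm}---with its auxiliary neighborhood taken to be all of $U = \mf g^0/\mf h_W^0$---yields, for every $\mu'$ in a suitable neighborhood $\mc U \subset MC(\mf g)$ of $\mu$, a family $I_0$ smoothly parametrized by an open neighborhood of $0$ in $\ker(\overline{[\mu,-]_{NR}})$ with $(\mu')^{\sigma_0(v)} \in \mf h_W^1$ for $v \in I_0$. Pushing this family forward along the splitting, $I := \sigma_0(I_0) \subset \mf g^0 = \End(V)$ is then the desired family of endomorphisms, since the gauge action by $v \in \mf g^0/\mf h_W^0$ appearing in Theorem~\ref{thm:mainthm} is by definition the gauge action of $\sigma_0(v) \in \End(V)$. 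I do not anticipate any genuine obstacle: all of the substance was supplied by the preparatory lemmas, and the only points requiring care are bookkeeping---confirming $\mu \in \mf h_W^1$ from the subalgebra condition, correctly interpreting the twisted differential when $\partial = 0$, and tracking the splitting $\sigma_0$ so that the abstract family in $\mf g^0/\mf h_W^0$ is reinterpreted as a family of endomorphisms of $V$.
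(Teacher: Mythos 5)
Your proposal is correct and follows exactly the route the paper takes: the theorem is obtained by directly invoking Theorem~\ref{thm:mainthm} for $(\mf g^\bullet = \wedge^{\bullet+1}V^\ast\otimes V, 0, [-,-]_{NR})$, the subalgebra $\mf h_W$, and the Maurer--Cartan element $\mu$, with the finite-dimensionality lemma guaranteeing Assumptions~\ref{ass:mainthm}a)--c). Your additional bookkeeping (checking $\mu\in\mf h_W^1$, identifying the twisted differential when $\partial=0$, and transporting the family along $\sigma_0$) is sound and matches what the paper leaves implicit.
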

We now interpret this result. In particular, we 
\begin{itemize}
    \item[i)] interpret the conclusion of the result,
    \item[ii)] give a description of $H^1(\mf g/\mf h,\overline{[\mu,-]_{NR}})$ in terms of Lie algebra cohomology.
\end{itemize}
This interpretation is the content of the next lemma.
\begin{lem} Let $V$ be a vector space and let $ W \subset V$ be a subspace.
\begin{itemize}
    \item[i)] For $\mu' \in \wedge^2 V^\ast \otimes V$, $A \in \End(V)$, we have $(\mu')^A \in \mf h^1_W$ if and only if $\mu'$ preserves the subspace $\exp(-A)(W)$.
    \item[ii)] The complex $(\mf g^\bullet/\mf h^\bullet,\overline{[\mu,-]_{NR}})$ is canonically isomorphic to $(\wedge^\bullet W^\ast\otimes V/W,d_{CE}^\sigma)$, which is the Chevalley-Eilenberg complex of $W$ with coefficients in the representation $V/W$, where $W$ acts via the Lie bracket of $V$. In particular, $\ker(\overline{[m,-]_{NR}}:\mf g^0/\mf h^0_W \to \mf g^1/\mf h^1_W)$ is the space of $V/W$-valued derivations on $(W,\left.\mu\right|_{\wedge^2 W})$.
\end{itemize}
\end{lem}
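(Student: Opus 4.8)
For part i) I would argue directly from the explicit gauge formula, with no further input. Recall that $\nu \in \wedge^2 V^\ast \otimes V$ lies in $\mf h^1_W$ precisely when $\nu(w_1,w_2)\in W$ for all $w_1,w_2\in W$, i.e. when $\nu$ preserves $W$, since $\mf h^1_W$ is by definition the kernel of $\nu\mapsto \nu|_{\wedge^2 W} \bmod W$. Applying this to $\nu=(\mu')^A$ and using $(\mu')^A(x,y)=\exp(A)\,\mu'(\exp(-A)x,\exp(-A)y)$, the condition $(\mu')^A\in\mf h^1_W$ reads $\exp(A)\,\mu'(\exp(-A)w_1,\exp(-A)w_2)\in W$ for all $w_1,w_2\in W$. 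Since $\exp(A)$ is invertible with inverse $\exp(-A)$, this is equivalent to $\mu'(\exp(-A)w_1,\exp(-A)w_2)\in\exp(-A)(W)$; substituting $w_i'=\exp(-A)w_i$, which range over $\exp(-A)(W)$ as $w_i$ range over $W$, this says exactly that $\mu'$ preserves $\exp(-A)(W)$.

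For part ii) the plan is to realize $\mf g^\bullet/\mf h^\bullet_W$ as the image of an explicit surjective cochain map out of $\mf g^\bullet$ and to identify that image with the Chevalley--Eilenberg complex of $W$. Concretely, I would consider the restriction-and-projection map $\phi:\wedge^{\bullet+1}V^\ast\otimes V \to \wedge^{\bullet+1}W^\ast\otimes V/W$ sending $\alpha\mapsto(\alpha|_{\wedge^{\bullet+1}W})\bmod W$. By the defining equation of $\mf h^\bullet_W$, this map is surjective (it is built from the surjections $V^\ast\to W^\ast$ and $V\to V/W$) with kernel exactly $\mf h^\bullet_W$, so it induces an isomorphism of graded vector spaces $\mf g^\bullet/\mf h^\bullet_W\cong\wedge^{\bullet+1}W^\ast\otimes V/W$, consistent with the degree shift $\mf g^i=\wedge^{i+1}V^\ast\otimes V=C_{CE}^{i+1}(V,V)$ already built into the paper's conventions.

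It then remains to match the differentials. Using the earlier lemma identifying $[\mu,-]_{NR}$ with the Chevalley--Eilenberg differential $d_{CE}^{\ad}$ of $(V,\mu)$ with adjoint coefficients, I would factor $\phi=p_\ast\circ r$, where $r$ restricts a cochain to arguments in $W$ and $p_\ast$ post-composes with the projection $p:V\to V/W$. Since $W$ is a subalgebra, $V$ is a $W$-module under the restricted adjoint action and $r$ is a cochain map $C_{CE}^\bullet(V,V)\to C_{CE}^\bullet(W,V)$; since $W$ preserves $W\subset V$, the quotient $V/W$ carries the induced action $\sigma(w)[v]=[\mu(w,v)]$ and $p$ is $W$-equivariant, so $p_\ast$ is a cochain map $C_{CE}^\bullet(W,V)\to C_{CE}^\bullet(W,V/W)$. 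Composing, $\phi$ intertwines $d_{CE}^{\ad}$ with $d_{CE}^\sigma$, so the induced isomorphism carries $\overline{[\mu,-]_{NR}}$ to $d_{CE}^\sigma$, which is the claimed isomorphism of complexes. The ``in particular'' statement follows by reading off cochain degree one: $\mf g^0/\mf h^0_W\cong W^\ast\otimes V/W=\Hom(W,V/W)$, and the kernel of the induced differential is the space of degree-one $d_{CE}^\sigma$-cocycles, i.e. the $V/W$-valued derivations of $(W,\mu|_{\wedge^2 W})$.

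The only genuine obstacle is the verification that $\phi$ is a cochain map, and here the difficulty is bookkeeping rather than depth: one must check that the subalgebra condition $\mu(W,W)\subset W$ is exactly what makes $r$ well-behaved (so that the bracket term $\alpha(\mu(x_i,x_j),\dots)$ keeps all arguments in $W$) and that $\sigma$ is well defined on $V/W$ so that $p$ is genuinely equivariant. Alternatively, one can bypass the factorization and compare the two differentials term by term from the explicit Chevalley--Eilenberg formula, taking care of the degree shift $\mf g^i=C_{CE}^{i+1}$; I expect the sign conventions in that direct comparison to be the most error-prone point.
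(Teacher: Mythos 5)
Your proposal is correct and follows the paper's own route: part (i) is verbatim the paper's computation with the explicit gauge formula, and part (ii) fleshes out what the paper dismisses as ``a direct computation,'' namely identifying $\mf g^\bullet/\mf h^\bullet_W$ with $\wedge^{\bullet+1}W^\ast\otimes V/W$ via restriction-and-projection and checking that this map intertwines the Chevalley--Eilenberg differentials using that $W$ is a subalgebra. No gaps.
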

\begin{proof} 
    \begin{itemize}
        \item []
        \item[i)] Let $\mu'\in \wedge^2V^\ast \otimes V$ and $A \in \End(V)$. Then
        \begin{align*}
                (\mu')^A \in \mf h^1_W &\iff \exp(A)(\mu'(\exp(-A) (W),\exp(-A) (W))) \subset W\\
                & \iff \mu'(\exp(-A)(W),\exp(-A)(W))\subset \exp(-A)(W)\\
                & \iff \text{$\mu'$ preserves $\exp(-A)(W)$.}
        \end{align*}
        \item[ii)] Follows by a direct computation.
    \end{itemize}
\end{proof}
Note that Theorem \ref{thm:mainthm} implies the existence of an $A\in \mf g^0$ of a special form: it lies in a complementary subspace to $\mf h^0\subset \mf g^0$. By picking an appropriate splitting, we can give a geometric interpretation of this fact. Let $C \subset V$ be a complement to $W$, i.e. $V = W \oplus C $. Then there is a natural inclusion $W^\ast \otimes V/W \cong W^\ast \otimes C \hookrightarrow \End(W\oplus C) \cong \End(V)  $. Under this identification, for $A \in W^\ast \otimes C$, $\exp(-A)(W)$ is simply the graph of $-A$ as a subset of $W\oplus C = V$.
So we obtain:
\begin{thm}[\cite{stabsubalgmorph}]\label{thm:stabsubalg}
    Let $(V,\mu)$ be a Lie algebra, and $W\subset V$ a Lie subalgebra. If $$H^2_{CE}(W,V/W) = 0,$$ then $W$ is a stable subalgebra. That is, any Lie algebra structure near $\mu$ admits a family of subalgebras $W'$ near $W$, parameterized by an open neighborhood of $0\in \text{Der}((W,\left.\mu\right|_{\wedge^2 W}),V/W)$.
\end{thm}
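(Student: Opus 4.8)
The plan is to obtain Theorem \ref{thm:stabsubalg} as a direct consequence of Theorem \ref{thm:mainthm}, applied to the graded Lie algebra $(\mf g^\bullet = \wedge^{\bullet+1}V^\ast \otimes V, 0, [-,-]_{NR})$, the subalgebra $\mf h_W^\bullet$ of \eqref{eq:subalgstabsubalg}, and the Maurer-Cartan element $\mu$; the two preceding lemmas then supply the translation of both the hypothesis and the conclusion into the language of subalgebras and Lie algebra cohomology. All the analytic content — that infinitesimal surjectivity forces local surjectivity — is already packaged in Theorem \ref{thm:mainthm}, so the remaining work is essentially bookkeeping: matching the cohomological hypothesis, reinterpreting the output, and describing the resulting parameter space geometrically.

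First I would rewrite the obstruction. By part ii) of the preceding lemma, the quotient complex $(\mf g^\bullet/\mf h_W^\bullet, \overline{[\mu,-]_{NR}})$ is canonically isomorphic, up to the degree shift built into $\mf g^i = \wedge^{i+1}V^\ast \otimes V$, to the Chevalley-Eilenberg complex $(\wedge^\bullet W^\ast \otimes V/W, d_{CE}^\sigma)$ of $(W, \mu|_{\wedge^2 W})$ with values in $V/W$. Concretely $\mf g^i/\mf h_W^i \cong \wedge^{i+1}W^\ast \otimes V/W$, so the degree shift gives
$$
H^1(\mf g/\mf h_W, \overline{[\mu,-]_{NR}}) \cong H^2_{CE}(W, V/W).
$$
Hence the hypothesis $H^2_{CE}(W,V/W) = 0$ is exactly the vanishing \eqref{eq:cohomcond} required to run Theorem \ref{thm:mainthm}. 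The same identification in degree $0$ shows that $\ker(\overline{[\mu,-]_{NR}}: \mf g^0/\mf h_W^0 \to \mf g^1/\mf h_W^1)$ is the space of $1$-cocycles, i.e. $\text{Der}((W, \mu|_{\wedge^2 W}), V/W)$, which will be the parameter space in the conclusion.

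Next I would feed this into the theorem obtained above by specializing Theorem \ref{thm:mainthm} to this data: for every $\mu'$ near $\mu$ there is a family $I \subset \End(V)$, smoothly parametrized by a neighborhood of $0$ in $\text{Der}((W,\mu|_{\wedge^2 W}), V/W)$, with $(\mu')^A \in \mf h_W^1$ for $A \in I$. By part i) of the preceding lemma, $(\mu')^A \in \mf h_W^1$ holds precisely when $\mu'$ preserves the subspace $\exp(-A)(W)$; so setting $W'_A := \exp(-A)(W)$ produces, for each $A \in I$, a $\mu'$-subalgebra. It then remains to argue that these $W'_A$ genuinely form a family of subspaces near $W$ realizing the claimed parametrization.

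The last point is the only step requiring a small geometric argument, and is where I would be most careful. Fixing a complement $C$ with $V = W \oplus C$, I would use the splitting $\sigma_0$ whose image is $W^\ast \otimes C \cong W^\ast \otimes V/W$, so that each $A$ produced by the theorem lies in $W^\ast \otimes C$ and, as noted after the preceding lemma, $\exp(-A)(W)$ is the graph of $-A: W \to C$. The assignment $A \mapsto \mathrm{graph}(-A)$ is a standard chart on the Grassmannian of $\dim W$-planes centered at $W$, so as $A$ ranges over the neighborhood of $0$ parametrizing $I$, the subalgebras $W'_A$ sweep out a smooth family of subspaces near $W$, parametrized by an open neighborhood of $0 \in \text{Der}((W,\mu|_{\wedge^2 W}), V/W)$, as claimed. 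The main obstacle is thus not in this assembly but is entirely absorbed into Theorem \ref{thm:mainthm}; here one only needs to keep the degree shift straight in the cohomology identification and to recognize the graph map as a Grassmannian chart.
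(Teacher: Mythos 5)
Your proposal is correct and follows essentially the same route as the paper: the same graded Lie algebra $(\wedge^{\bullet+1}V^\ast\otimes V,0,[-,-]_{NR})$, the same subalgebra $\mf h_W$, the same two interpretive lemmas (the degree-shifted identification of the quotient complex with $C^\bullet_{CE}(W,V/W)$ and the characterization of $(\mu')^A\in\mf h_W^1$ as $\mu'$ preserving $\exp(-A)(W)$), and the same choice of complement $C$ realizing $\exp(-A)(W)$ as a graph. The only addition is your explicit remark that $A\mapsto\mathrm{graph}(-A)$ is a Grassmannian chart, which the paper leaves implicit; nothing is missing.
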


\subsection{Stability of morphisms of Lie algebras}\label{ssec:stabmorph}
As a related question, we investigate the stability of a morphism of Lie algebras, which was also considered in \cite{stabsubalgmorph}. Here, we ask the following question:\begin{center}Given a map of Lie algebras $f:(V,\mu) \to (W,\nu)$, when do all Lie algebra structures $\mu' \in \wedge^2V^\ast \otimes V, \nu' \in \wedge^2 W^\ast \otimes W$ near $\mu$ and $\nu$ respectively admit a Lie algebra morphism $f':(V,\mu') \to (W,\nu')$?  \end{center}

We first apply Theorem \ref{thm:mainthm} in a naive way, which, for $\mu', \nu'$ near $\mu,\nu$ as above, will give a criterion for the existence of endomorphisms $A \in \End(V), B\in \End(W)$ such that 
$$
\exp(-B)\circ f \circ \exp(A)
$$
is a morphism of Lie algebras $(V,\mu') \to (W,\nu')$. While this does give a condition for the existence of a morphism, it only detects morphisms of the same rank as $f$, which is a consequence of the way the gauge equivalences act in the graded Lie algebra controlling the deformations of $\mu$ and $\nu$. 

As we are deforming two Lie algebra structures simultaneously and independently, the natural graded Lie algebra to consider is 
$$
(\mf g^\bullet := \wedge^{\bullet+1} V^\ast \otimes V \oplus \wedge^ {\bullet+1} W^\ast \otimes W,0,[-,-]_{NR}).
$$
Here the summands are equipped with the natural Nijenhuis-Richardson bracket, while all cross-terms vanish. The Maurer-Cartan elements are pairs $(\mu',\nu')\in \wedge^2 V^ \ast \otimes V \oplus \wedge^2 W^\ast \otimes W$ such that $\mu'$ and $\nu'$ are Lie algebra structures on $V,W$ respectively. Now pick $(\mu,\nu) \in \mf g^1$ to be a Lie algebra structure, and let $f:(V,\mu) \to (W,\nu)$ be a morphism of Lie algebras. 

Following the procedure, we want to identify a differential graded Lie subalgebra $\mf h^\bullet \subset \mf g^\bullet$, such that the Maurer-Cartan elements are those pairs of Lie algebra structures on $V$ and $W$, such that $f$ is a morphism between them. Note that for $(\mu',\nu')\in \mf g^1$, $f$ is a morphism $(V,\mu')\to(W,\nu')$ if and only if
$$
f\circ \mu' - \nu'\circ f\wedge f = 0.
$$
Again, this condition is easily generalized to higher degrees.
Define the map
$$
F^\bullet:\wedge^{\bullet+1} V^\ast \otimes V \oplus \wedge^{\bullet +1} W^\ast \otimes W \to \wedge^{\bullet +1} V^\ast \otimes W, \,\,\,(\alpha,\beta) \mapsto f\circ \alpha - \beta\circ f^{\wedge(\bullet+1)}.
$$
Then we set
\begin{equation}\label{eq:subalgmorphism}
\mf h_f^\bullet := \ker(F^\bullet).
\end{equation}
It is easy to check that $\mf h$ is a graded Lie subalgebra. We are now directly in the framework of Theorem \ref{thm:mainthm}: for the graded Lie algebra $(\mf g^\bullet:= \wedge^{\bullet+1} V^\ast \otimes V \oplus \wedge^{\bullet +1} W^\ast \otimes W,0,[-,-]_{NR})$, the graded Lie subalgebra $\mf h_f^\bullet = \ker(F)$, and the Maurer-Cartan element $(\mu,\nu) \in \mf g^1$, we obtain:
\begin{thm}\label{thm:stabmorph1}
    Let $f:(V,\mu) \to (W,\nu)$ be a morphism of Lie algebras. Assume that $$H^1(\mf g/\mf h_f,\overline{[(\mu,\nu),-]_{NR}}) = 0.$$ Then for any pair of Lie algebra structures $(\mu',\nu')$ near $(\mu,\nu)$ there are endomorphisms $(A,B) \in \End(V)\oplus \End(W)$, parametrized by an open neighborhood of $0\in\text{Der}(V,\mu) \oplus \text{Der}(W,\nu)$, such that $f$ is a morphism of Lie algebras $(V,(\mu')^A) \to (W,(\nu')^B)$.
\end{thm}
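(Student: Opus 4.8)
The plan is to apply Theorem~\ref{thm:mainthm} verbatim to the data $(\mf g^\bullet,0,[-,-]_{NR})$, the subalgebra $\mf h_f^\bullet=\ker(F^\bullet)$, and the element $(\mu,\nu)\in\mf g^1$. First I would check that all the standing input is in place. Since $\mf g$ is degree-wise finite-dimensional, the lemma following Theorem~\ref{thm:mainthm} guarantees that Assumptions~\ref{ass:mainthm}a)--c) hold for any choice of splittings $\sigma_0,\sigma_1$. The pair $(\mu,\nu)$ is a Maurer-Cartan element of $\mf g$ because $\mu$ and $\nu$ are Lie brackets, and it lies in $\mf h_f^1$ precisely because $f\circ\mu=\nu\circ f^{\wedge 2}$, i.e. because $f$ is a morphism $(V,\mu)\to(W,\nu)$; hence $(\mu,\nu)$ is a Maurer-Cartan element of $\mf h_f$. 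With the hypothesis $H^1(\mf g/\mf h_f,\overline{[(\mu,\nu),-]_{NR}})=0$ in force, Theorem~\ref{thm:mainthm} then yields, for each $(\mu',\nu')$ near $(\mu,\nu)$, a family $\{\sigma_0(v)\}$ of elements of $\mf g^0$ with $(\mu',\nu')^{\sigma_0(v)}\in\mf h_f^1$.

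The second step is to unwind this abstract conclusion into the concrete one. For the gauge action, the decisive point is that $\partial=0$ and the Nijenhuis--Richardson bracket on $\wedge^{\bullet+1}V^\ast\otimes V\oplus\wedge^{\bullet+1}W^\ast\otimes W$ has vanishing cross-terms, so the flow \eqref{eq:gaugeeq} generated by $(A,B)\in\End(V)\oplus\End(W)$ splits into the two independent flows on the summands. By the explicit formula from the rigidity-of-Lie-algebras lemma, its time-$1$ value is $(\mu',\nu')^{(A,B)}=\big((\mu')^A,(\nu')^B\big)$ with $(\mu')^A=\exp(A)\circ\mu'(\exp(-A)-,\exp(-A)-)$ and likewise for $(\nu')^B$. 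The membership condition $\big((\mu')^A,(\nu')^B\big)\in\mf h_f^1=\ker F^1$ reads $f\circ(\mu')^A=(\nu')^B\circ f^{\wedge 2}$, which is exactly the statement that $f$ is a Lie algebra morphism $(V,(\mu')^A)\to(W,(\nu')^B)$. Taking $(A,B)=\sigma_0(v)$ then produces the endomorphisms asserted in the theorem.

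The delicate step is identifying the space over which the family is parametrized. Theorem~\ref{thm:mainthm} gives a smooth parametrization by a neighborhood of $0$ in $\ker\big(\overline{[(\mu,\nu),-]_{NR}}:\mf g^0/\mf h_f^0\to\mf g^1/\mf h_f^1\big)$, so in the spirit of the interpretation lemmas behind Theorem~\ref{thm:stabsubalg} and Theorem~\ref{thm:rigmorph} I would compute this kernel by hand. Writing $\gamma:=F^0(\alpha,\beta)=f\circ\alpha-\beta\circ f$ and using $f\circ\mu=\nu\circ f^{\wedge2}$, a direct calculation should show that $[(\mu,\nu),(\alpha,\beta)]_{NR}\in\mf h_f^1$ if and only if $\gamma$ is a Chevalley--Eilenberg $1$-cocycle of $(V,\mu)$ with values in $W$, regarded as a $V$-module through $f$; via the isomorphism $\mf g^0/\mf h_f^0\cong\operatorname{im}(F^0)$ this realizes the kernel as $\operatorname{im}(F^0)$ intersected with these $1$-cocycles. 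I expect reconciling this description with the parametrization by $\text{Der}(V,\mu)\oplus\text{Der}(W,\nu)$ named in the statement to be the main obstacle: one checks that derivations of $(V,\mu)$ and of $(W,\nu)$ do land in this kernel (their exponentials fix $\mu$ and $\nu$, so they contribute trivial first-order moves), and the remaining work is the bookkeeping needed to pin down the precise parametrizing space. The auxiliary facts that $\mf h_f$ is a graded Lie subalgebra and that $F^\bullet$ intertwines the differentials are routine.
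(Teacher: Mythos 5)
Your proposal follows the paper's route exactly: the paper's entire proof consists of declaring that the data $(\mf g^\bullet,0,[-,-]_{NR})$, $\mf h_f^\bullet=\ker(F^\bullet)$ and the Maurer--Cartan element $(\mu,\nu)$ place one directly in the framework of Theorem~\ref{thm:mainthm}, which is precisely your first two paragraphs (and your unwinding of the split gauge action and of the condition $\bigl((\mu')^A,(\nu')^B\bigr)\in\ker F^1$ is correct). The identification of the parametrizing space with $\mathrm{Der}(V,\mu)\oplus\mathrm{Der}(W,\nu)$, which you rightly flag as the delicate point, is not carried out in the paper either --- Theorem~\ref{thm:mainthm} literally parametrizes the family by a neighborhood of $0$ in $\ker\bigl(\overline{[(\mu,\nu),-]_{NR}}:\mf g^0/\mf h_f^0\to\mf g^1/\mf h_f^1\bigr)$, and no interpretation lemma for this quotient is supplied here (unlike in Sections~\ref{ssec:stabsubalg} and~\ref{ssec:mapintosubalg}), so your extra care on that step goes beyond the source rather than conflicting with it.
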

As mentioned at the beginning of this section, the conclusion implies that 
$$
\exp(-B)\circ f \circ \exp(A): (V,\mu') \to (W,\nu')
$$
is a morphism of Lie algebras.

Note that this result only detects Lie algebra morphisms which are conjugate to $f$ by automorphisms of $V$ and $W$. Ideally, we would like a result for all nearby morphisms. For that we recall the following lemma, characterizing Lie algebra morphisms.
\begin{lem}
    Let $f:(V,\mu)\to (W,\nu)$ be a linear map between Lie algebras. Then $f$ is a Lie algebra morphism if and only if the graph of $f$ is a Lie subalgebra of $(V\oplus W, \mu\oplus \nu)$.
\end{lem}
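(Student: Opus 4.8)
The plan is to reduce the statement to a one-line computation of the bracket on $V \oplus W$ restricted to the graph of $f$. First I would record that, since $f$ is linear, the graph $\{(v, f(v)) : v \in V\}$ is automatically a linear subspace of $V \oplus W$ (indeed isomorphic to $V$ via the projection to the first factor). Hence the word ``subalgebra'' imposes no extra linear constraint, and the only content of the statement is whether the graph is closed under the bracket $\mu \oplus \nu$, which is defined componentwise by $(\mu \oplus \nu)((v_1, w_1), (v_2, w_2)) = (\mu(v_1, v_2), \nu(w_1, w_2))$.

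The key step is then to evaluate this bracket on two elements of the graph. For $v_1, v_2 \in V$ I would compute
$$(\mu \oplus \nu)((v_1, f(v_1)), (v_2, f(v_2))) = (\mu(v_1, v_2), \nu(f(v_1), f(v_2))).$$
An element $(x, y) \in V \oplus W$ lies in the graph precisely when $y = f(x)$, so the bracket above lies in the graph if and only if $\nu(f(v_1), f(v_2)) = f(\mu(v_1, v_2))$. Imposing this for all $v_1, v_2 \in V$ is exactly the condition that $f$ be a Lie algebra morphism. Therefore the graph is closed under the bracket, i.e.\ is a subalgebra, if and only if $f$ is a morphism, which is the claim.

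Since both implications follow from the same identity read in opposite directions, there is no genuine obstacle here; the only point that warrants care is the preliminary observation that the graph is honestly a subspace, so that the subalgebra condition reduces to bracket-closure alone, and this is immediate from the linearity of $f$.
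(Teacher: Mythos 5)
Your proof is correct and is exactly the standard computation the paper has in mind (the paper states this lemma without proof, treating it as immediate). Evaluating $\mu\oplus\nu$ on two graph elements and noting that membership in the graph is precisely the condition $\nu(f(v_1),f(v_2))=f(\mu(v_1,v_2))$ is the whole argument, and your preliminary remark that linearity of $f$ makes the graph a subspace correctly disposes of the only other requirement.
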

Using this observation, we will give a criterion for stability of the graph of a Lie algebra morphism $f:(V,\mu) \to (W,\nu)$ as subalgebra of $(V\oplus W,\mu\oplus \nu)$. Note that any subspace close to the graph of $f$ is the graph of a linear map $f':V\to W$, due to the transversality to $W\subset V\oplus W$. However, not every Lie algebra structure $M$ on $V\oplus W$ is of the form $\mu' \oplus \nu'$ for Lie algebra structures $\mu',\nu'$ on $V,W$ respectively. Therefore, applying Theorem \ref{thm:stabsubalg} to the Lie algebra $(V\oplus W,\mu\oplus \nu)$ and the subalgebra $\text{graph}(f)$, we obtain:
\begin{thm} \label{thm:stabmorph2}
   Let $f:(V,\mu) \to (W,\nu)$ be a morphism of Lie algebras. If $$H^2_{CE}\left(\text{graph}(f),\frac{V\oplus W}{\text{graph}(f)}\right) = 0,$$ then for any Lie algebra structure $M$ on $V \oplus W$ there are linear maps $f':V\to W$ near $f$, parameterized by an open neighborhood of $0\in \text{Der}\left((\text{graph}(f),\left.\mu\right|_{\wedge^2 \text{graph}(f)}),\frac{V\oplus W}{\text{graph}(f)}\right)$ such that $\text{graph}(f') \subset (V\oplus W,M)$ is a Lie subalgebra. In particular, when $M = \mu' \oplus \nu'$, $f':(V,\mu') \to (W,\nu')$ is a Lie algebra morphism.
\end{thm}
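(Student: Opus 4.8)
The plan is to deduce the statement directly from the subalgebra stability result, Theorem~\ref{thm:stabsubalg}, applied to the ambient Lie algebra $(V\oplus W,\mu\oplus\nu)$ together with its subalgebra $\text{graph}(f)$. By the characterization lemma immediately preceding the theorem, the hypothesis that $f$ is a Lie algebra morphism is exactly the statement that $\text{graph}(f)$ is a Lie subalgebra of $(V\oplus W,\mu\oplus\nu)$, so this is a legitimate instance of Theorem~\ref{thm:stabsubalg}. With $V\oplus W$ in the role of the ambient algebra, $\text{graph}(f)$ in the role of the subalgebra, and $\mu\oplus\nu$ in the role of the bracket, the cohomological hypothesis $H^2_{CE}\!\left(\text{graph}(f),\frac{V\oplus W}{\text{graph}(f)}\right)=0$ is precisely the one required, so Theorem~\ref{thm:stabsubalg} applies verbatim.

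Applying it yields, for every Lie algebra structure $M$ on $V\oplus W$ near $\mu\oplus\nu$, a family of Lie subalgebras $U\subset(V\oplus W,M)$ near $\text{graph}(f)$, smoothly parametrized by an open neighborhood of $0$ in $\text{Der}\!\left((\text{graph}(f),\left.(\mu\oplus\nu)\right|_{\wedge^2\text{graph}(f)}),\frac{V\oplus W}{\text{graph}(f)}\right)$. Two things then remain: to reinterpret these nearby subalgebras as graphs of linear maps $f'\colon V\to W$ near $f$, and, in the split case $M=\mu'\oplus\nu'$, to translate the subalgebra condition back into the morphism condition.

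The main technical point, and the only genuine obstacle, is the first of these: showing that every subspace $U$ sufficiently close to $\text{graph}(f)$ is the graph of a unique linear map $f'\colon V\to W$ close to $f$. First I would note that $\text{graph}(f)$ is complementary to $\{0\}\oplus W\subset V\oplus W$, since $(v,f(v))=(0,w)$ forces $v=0$ and hence $w=0$, while $\dim\text{graph}(f)+\dim W=\dim(V\oplus W)$. Being a complement of the fixed subspace $\{0\}\oplus W$ is an open condition on the Grassmannian, so every $U$ near $\text{graph}(f)$ remains complementary to $\{0\}\oplus W$ and, having the same dimension $\dim V$, projects isomorphically onto $V$ under $\pi_V\colon V\oplus W\to V$. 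Setting $f':=\pi_W\circ(\pi_V|_U)^{-1}$ exhibits $U=\text{graph}(f')$, and since this assignment $U\mapsto f'$ is the standard graph chart on the Grassmannian, it is smooth and sends subspaces near $\text{graph}(f)$ to maps near $f$. Composing it with the parametrization furnished by Theorem~\ref{thm:stabsubalg} produces the asserted family of maps $f'$ near $f$, parametrized by the same neighborhood in the derivation space.

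Finally, for the last sentence of the statement, I would specialize to $M=\mu'\oplus\nu'$ and invoke the characterization lemma once more: $\text{graph}(f')$ being a Lie subalgebra of $(V\oplus W,\mu'\oplus\nu')$ is equivalent to $f'\colon(V,\mu')\to(W,\nu')$ being a morphism of Lie algebras. This closes the argument, the whole proof being a transfer of the subalgebra stability theorem through the graph correspondence, with the transversality identification of nearby subalgebras as graphs being the only step that requires care.
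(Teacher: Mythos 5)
Your proposal is correct and follows essentially the same route as the paper: apply Theorem~\ref{thm:stabsubalg} to $(V\oplus W,\mu\oplus\nu)$ with the subalgebra $\mathrm{graph}(f)$, identify nearby subspaces as graphs of linear maps via transversality to $\{0\}\oplus W$, and specialize to split structures $M=\mu'\oplus\nu'$ to recover the morphism condition. Your treatment of the graph correspondence is more detailed than the paper's one-line remark, but the argument is the same.
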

\begin{proof}
     By Theorem \ref{thm:stabsubalg} applied to the Lie algebra $(V\oplus W, \mu \oplus \nu)$ and the Lie subalgebra $\text{graph}(f)\subset V\oplus W$, for every Lie algebra structure $M$ on $V \oplus W$, there is a subspace $T \subset V\oplus W$ near $\text{graph}(f)$ which is a Lie subalgebra of $(V\oplus W, M)$. As any subspace close enough to $\text{graph}(f)$ can be written as the graph of a linear map $f':V\to W$, the result follows. In particular, for decomposable Lie algebra structures $M = \mu' \oplus \nu'$, this implies that $f':(V,\mu') \to (W,\nu')$ is a Lie algebra morphism.
\end{proof}
\begin{rmk}
    Using the identifications $V \cong \text{graph}(f), v\mapsto (v,f(v))$ and $W\cong \frac{V\oplus W}{\text{graph}(f)},   w \mapsto (0,w) \mod \text{graph}(f)$, there is an isomorphism
    $$
    C^\bullet_{CE}\left(\text{graph}(f),\frac{V\oplus W}{\text{graph}(f)}\right) \cong C_{CE}^\bullet(V,W),
    $$
    where $W$ is seen as a $V$-module through $f$. This is the same cohomological hypothesis as \cite{crainic2013survey} and \cite{stabsubalgmorph}, but our result is stronger: under the same assumptions, the morphism is stable under a larger class of deformations.
\end{rmk}
\begin{rmk}
There is a relation between Theorems \ref{thm:stabmorph1} and \ref{thm:stabmorph2}: given a morphism of Lie algebras $f:(V,\mu) \to (W,\nu)$, deformations of $\mu$ and $\nu$ induce deformations of the Lie algebra $(V\oplus W, \mu\oplus \nu)$. This is reflected in a graded Lie algebra morphism
$$
(\mf g_1^\bullet := \wedge^{\bullet+1} V^\ast \otimes V \oplus \wedge^{\bullet +1} W^\ast \oplus W,0,[-,-]_{NR}) \to (\mf g_2^\bullet := \wedge^{\bullet+1}(V\oplus W)^\ast \otimes (V\oplus W),0, [-,-]_{NR}).
$$
mapping $(\mu,\nu)$ to $\mu\oplus \nu$.

Moreover, $\mf h_f^\bullet$ as in \eqref{eq:subalgmorphism} is mapped to $\mf h_{graph(f)}^\bullet$ as in \eqref{eq:subalgstabsubalg}. Therefore, there is an induced map on the cohomological obstructions
\begin{equation}\label{eq:cohomindf}
H^1(\mf g_1/\mf h_{f}, \overline{[(\mu,\nu),-]_{NR}}) \to H^1(\mf g_2/\mf h_{\text{graph}(f)},\overline{[\mu\oplus\nu,-]_{NR}}). 
\end{equation}
Although stability of the $\text{graph}(f)$ as subalgebra of $(V\oplus W,\mu\oplus \nu)$ implies stability of $f$ as a morphism $f:(V,\mu)\to (W,\nu)$, the vanishing of the right hand side of \eqref{eq:cohomindf} does not imply the vanishing of the left hand side, as the vanishing of the left hand side only detects morphisms of the same rank as $f$.
\end{rmk}
\subsection{Stability of mapping into a fixed subalgebra}\label{ssec:mapintosubalg}
In this section, we investigate the stability of a Lie algebra morphism $f:(V,\mu) \to (W,\nu)$ mapping into a given subalgebra $U\subset W$:
\begin{center}
    Given a morphism $f:(V,\mu) \to (W,\nu)$ such that $f(V) \subset U$, where $U\subset W$ is a Lie subalgebra, when do all nearby morphisms $f'$ satisfy $f'(V) \subset U'$ for some nearby Lie subalgebra $U'\subset W$?
\end{center}
We follow the procedure outlined in Section \ref{sec:stability} to identify the ingredients for Theorem \ref{thm:mainthm}. Let $f:(V,\mu) \to (W,\nu)$ be a morphism of Lie algebras, and consider the differential graded Lie algebra 
$$
(\mf g^\bullet = \wedge^\bullet V^\ast \otimes W, d_{CE},[-,-]_\nu).
$$
Recall that Maurer-Cartan elements are precisely Lie algebra morphisms $f:(V,\mu) \to (W,\nu)$. Now assume that $U\subset W$ is a Lie subalgebra, and that $f(V)\subset U$. To define the differential graded Lie subalgebra which encodes deformations $f'$ of $f$ such that $f'(V)\subset U$, we note that $f'(V)\subset U$ if and only if $f' \in V^\ast \otimes U \subset V^\ast \otimes W$. This condition has a clear generalization to other degrees, so define
\begin{equation}\label{eq:subalgmorphmap}
\mf h^\bullet_{U} := \wedge^\bullet V^\ast \otimes U.
\end{equation}
Then it is easy to see that $\mf h^\bullet_{U}$ is a differential graded Lie subalgebra of $(\mf g^\bullet = \wedge^\bullet V^\ast \otimes W ,d_{CE},[-,-]_{\nu})$.

Applying Theorem \ref{thm:mainthm} to $(\mf g^\bullet = \wedge^\bullet V^\ast \otimes W ,d_{CE},[-,-]_{\nu})$, $\mf h^\bullet = \wedge^\bullet V^\ast \otimes U$ and the Maurer-Cartan element $Q_0 = f$, we obtain:
\begin{thm}\label{thm:morphsubalgmap}
If $$H^1(\mf g/\mf h_{U}, \overline{d_{CE} +[f,-]_{\nu}}) = 0,$$ then for any Lie algebra morphism $f':(V,\mu) \to (W,\nu)$ near $f$, there exists a family $I\subset W$, smoothly parameterized by an open neighborhood of $0 \in \ker(\overline{d_{CE} +[f,-]_{\nu}}:\mf g^0/\mf h^0_{U} \to  \mf g^1/\mf h^1_{U})$, such that for $w \in I$, we have $(f')^w \in \mf h^1_{U}$.
\end{thm}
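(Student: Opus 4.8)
The plan is to observe that the statement is a direct specialization of Theorem \ref{thm:mainthm}, so the work reduces to checking that the data $(\mf g^\bullet, \mf h^\bullet_U, f)$ genuinely satisfies the hypotheses of that theorem and then reading off its conclusion. First I would verify that $\mf h^\bullet_U = \wedge^\bullet V^\ast \otimes U$ is a differential graded Lie subalgebra of $(\mf g^\bullet = \wedge^\bullet V^\ast \otimes W, d_{CE}, [-,-]_\nu)$. Closure under the bracket is immediate: since $[-,-]_\nu$ is the $\wedge^\bullet V^\ast$-linear extension of $\nu$ and $U \subset W$ is a Lie subalgebra, the inclusion $\nu(U, U) \subset U$ forces $[\mf h^\bullet_U, \mf h^\bullet_U]_\nu \subset \mf h^\bullet_U$. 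Closure under $d_{CE}$ is equally direct: because $W$ is viewed as the \emph{trivial} $V$-module, $d_{CE}$ acts only on the $\wedge^\bullet V^\ast$ tensor factor and leaves the coefficient space untouched, so it preserves the subspace with coefficients in $U$.

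Next I would dispatch the technical assumptions. Since $V$ and $W$ are finite-dimensional, each $\mf g^i = \wedge^i V^\ast \otimes W$ is finite-dimensional, and $\mf h^i_U = \wedge^i V^\ast \otimes U$ is automatically a closed subspace of finite codimension. Hence the lemma following Theorem \ref{thm:mainthm} applies verbatim: for $\mf g^0/\mf h^0_U = W/U$ and any choice of splittings $\sigma_0, \sigma_1$, Assumptions \ref{ass:mainthm}a)--c) hold. The Maurer-Cartan element is $Q = f$, whose induced twisted differential on $\mf g$ is precisely $d_{CE} + [f,-]_\nu$, so the hypothesis $H^1(\mf g/\mf h_U, \overline{d_{CE}+[f,-]_\nu}) = 0$ is exactly condition \eqref{eq:cohomcond}.

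With all hypotheses in place, I would invoke Theorem \ref{thm:mainthm} and translate its conclusion into the present language. Maurer-Cartan elements $Q' \in MC(\mf g)$ near $f$ are exactly Lie algebra morphisms $f':(V,\mu)\to(W,\nu)$ near $f$, and by the formula $f^X = \exp(-\text{ad}_X)\circ f$ recorded above, the gauge action of $X \in \mf g^0 = W$ turns $(Q')^{\sigma_0(v)}$ into $(f')^w$ for $w = \sigma_0(v) \in W$. The theorem then produces, for each morphism $f'$ near $f$, a family $I$ of such $w$, smoothly parametrized by an open neighborhood of $0$ in the kernel of the induced map $\overline{d_{CE}+[f,-]_\nu}:\mf g^0/\mf h^0_U \to \mf g^1/\mf h^1_U$, with $(f')^w \in \mf h^1_U$, which is the assertion.

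The only genuine point requiring care, and the place I would expect to spend effort, is the bookkeeping in this last translation: matching the abstract splitting $\sigma_0$ and the parametrizing space of Theorem \ref{thm:mainthm} against the concrete $W/U$ and the twisted differential $\overline{d_{CE}+[f,-]_\nu}$, and confirming that the exponential gauge flow of $\sigma_0(v)$ indeed realizes $(f')^w = \exp(-\text{ad}_w)\circ f'$. No deep new idea is needed, but one must be attentive that the differential entering the parametrizing kernel is the twisted one, consistently with the hypothesis and with the analogous statements in the earlier applications.
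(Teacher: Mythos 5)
Your proposal is correct and follows exactly the paper's route: verify that $\mf h^\bullet_U = \wedge^\bullet V^\ast\otimes U$ is a differential graded Lie subalgebra (which the paper records as "easy to see"), note that the finite-dimensionality lemma discharges Assumptions \ref{ass:mainthm}, and then read off the conclusion of Theorem \ref{thm:mainthm} for the Maurer-Cartan element $Q_0 = f$. The paper gives no further detail, so your additional care about the closure of $\mf h_U$ under $d_{CE}$ and the identification of the gauge action is a faithful elaboration of the same argument.
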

In the following lemma we unpack the content of Theorem \ref{thm:morphsubalgmap}, of which we leave the proof to the reader.
\begin{lem} Let $f:(V,\mu)\to (W,\nu)$ be a morphism of Lie algebras. 
    \begin{itemize}
        \item[-] For $f':V\to W$, $w\in W$, we have $(f')^w\in \mf h^1_U$ if and only if $f'(V)\subset \exp(\text{ad}_w)(U)$.
        \item[-] The complex $(\mf g/\mf h_U, \overline{d_{CE} + [f,-]_\nu})$ is isomorphic to the Chevalley-Eilenberg complex of $(V,\mu)$ with values in $W/U$, where $W/U$ is a $(V,\mu)$-module through $f$. In particular, 
        $\ker(\overline{d_{CE} +[f,-]_\nu}:\mf g^0/\mf h^0_U \to \mf g^1/\mf h^1_U) = \{[w] \in W/U \mid \nu(w,f(V)) \in U\}. $
    \end{itemize}
\end{lem}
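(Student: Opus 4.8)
The plan is to dispatch the two bullet points separately; each follows by unwinding definitions, with the only substantive point being that $U$ is a submodule of $W$.

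For the first bullet, I would use the gauge-action formula from the earlier lemma, namely $(f')^w = \exp(-\text{ad}_w)\circ f'$, together with the fact that $\mf h^1_U = V^\ast\otimes U$ consists precisely of those linear maps $V\to W$ whose image lies in $U$. Thus $(f')^w\in\mf h^1_U$ holds if and only if $\exp(-\text{ad}_w)(f'(V))\subset U$, and applying the invertible operator $\exp(\text{ad}_w)$ to both sides shows this is equivalent to $f'(V)\subset\exp(\text{ad}_w)(U)$.

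For the second bullet I would first identify the graded spaces. Since tensoring with the finite-dimensional space $\wedge^\bullet V^\ast$ is exact, the short exact sequence $0\to U\to W\to W/U\to 0$ gives a canonical isomorphism $\mf g^\bullet/\mf h^\bullet_U\cong\wedge^\bullet V^\ast\otimes(W/U)$. I would then invoke the earlier lemma identifying $d_{CE}+[f,-]_\nu$ on $\mf g$ with the Chevalley-Eilenberg differential $d_{CE}^\rho$ of $(V,\mu)$ valued in $W$, where $\rho(v)(w)=\nu(f(v),w)$. The main step is to check that this differential descends along the projection $W\twoheadrightarrow W/U$, for which it suffices that $U$ is a $\rho$-submodule. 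This is exactly where both hypotheses are used: for $v\in V$ and $u\in U$ we have $f(v)\in U$ since $f(V)\subset U$, so $\rho(v)(u)=\nu(f(v),u)\in\nu(U,U)\subset U$ as $U$ is a subalgebra. Hence $W/U$ carries an induced $(V,\mu)$-module structure, the projection is a cochain map, and the quotient differential $\overline{d_{CE}+[f,-]_\nu}$ is precisely $d_{CE}^{\bar\rho}$ on $\wedge^\bullet V^\ast\otimes(W/U)$, as claimed.

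Finally, the ``in particular'' assertion is a degree-$0$ computation in this model: here $\mf g^0/\mf h^0_U=W/U$ and the induced differential sends $[w]$ to the map $v\mapsto\bar\rho(v)([w])=[\nu(f(v),w)]$. This map is zero for all $v$ exactly when $\nu(f(v),w)\in U$ for every $v\in V$, which by skew-symmetry of $\nu$ is the stated condition $\nu(w,f(V))\subset U$; one also checks independence of the chosen representative $w$ of $[w]$, since $\nu(u,f(V))\subset U$ for $u\in U$. I do not anticipate any genuine difficulty beyond the submodule verification, which is the only place the structural hypotheses on $U$ and $f$ enter.
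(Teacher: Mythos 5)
Your proposal is correct and follows exactly the direct unwinding of definitions that the paper intends (it explicitly leaves this proof to the reader, as with the analogous lemma for subalgebras): the gauge formula $(f')^w=\exp(-\mathrm{ad}_w)\circ f'$ gives the first bullet, and the identification $\mf g^\bullet/\mf h^\bullet_U\cong\wedge^\bullet V^\ast\otimes(W/U)$ together with the submodule check $\nu(f(V),U)\subset\nu(U,U)\subset U$ gives the second. You correctly isolate the one substantive point, namely where the hypotheses $f(V)\subset U$ and $U$ a subalgebra are used, so nothing is missing.
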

We therefore obtain:
\begin{thm}\label{thm:morphsubalgconcr} Let $f:(V,\mu) \to (W,\nu)$ be a morphism of Lie algebras, and assume that $f(V) \subset U$, where $U\subset W$ is a Lie subalgebra. If 
$$
H^1_{CE}(V,W/U) = 0,
$$
then for any Lie algebra morphism $f':(V,\mu) \to (W,\nu)$ near $f$, there exists a family $I\subset W$, parameterized by an open neighborhood of $0\in\{[w] \in W/U \mid \nu(w,f(V)) \in U\} $, such that for $w\in I$, we have $f'(V) \subset \exp(\text{ad}_w)(U)$. 
\end{thm}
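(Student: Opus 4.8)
The plan is to obtain Theorem \ref{thm:morphsubalgconcr} as an immediate translation of Theorem \ref{thm:morphsubalgmap} under the two identifications furnished by the preceding lemma; no further deformation-theoretic input is needed, so the entire argument is a bookkeeping exercise matching hypotheses and conclusions across the abstract and concrete descriptions of the quotient complex.

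First I would match the cohomological hypotheses. The second point of the preceding lemma identifies the quotient complex $(\mf g/\mf h_U, \overline{d_{CE} + [f,-]_\nu})$ with the Chevalley-Eilenberg complex $C^\bullet_{CE}(V, W/U)$, where $W/U$ is a $(V,\mu)$-module via $f$. In particular this isomorphism identifies the degree-one cohomology groups, so the hypothesis $H^1_{CE}(V,W/U) = 0$ is exactly the hypothesis $H^1(\mf g/\mf h_U, \overline{d_{CE}+[f,-]_\nu}) = 0$ needed to invoke Theorem \ref{thm:morphsubalgmap}.

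Next I would feed this into Theorem \ref{thm:morphsubalgmap}, which then produces, for every Lie algebra morphism $f'$ near $f$, a family $I \subset W$ smoothly parameterized by a neighborhood of $0$ in $\ker(\overline{d_{CE}+[f,-]_\nu}: \mf g^0/\mf h^0_U \to \mf g^1/\mf h^1_U)$ with $(f')^w \in \mf h^1_U$ for each $w \in I$. It remains to rewrite both the conclusion and the parameterizing space in concrete terms. For the conclusion, the first point of the lemma says $(f')^w \in \mf h^1_U$ is equivalent to $f'(V) \subset \exp(\text{ad}_w)(U)$, which is exactly the stability statement claimed. For the parameterizing space, the second point identifies $\ker(\overline{d_{CE}+[f,-]_\nu}: \mf g^0/\mf h^0_U \to \mf g^1/\mf h^1_U)$ with $\{[w] \in W/U \mid \nu(w, f(V)) \in U\}$, giving the asserted domain of the parameterization.

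The only genuine point to verify — and the step I would check most carefully — is that the gauge-action sign convention $f^w = \exp(-\text{ad}_w) \circ f$ is consistent with the equivalence $(f')^w \in \mf h^1_U \iff f'(V) \subset \exp(\text{ad}_w)(U)$: since $(f')^w = \exp(-\text{ad}_w) \circ f'$ lands in $\mf h^1_U = \wedge^\bullet V^\ast \otimes U$ precisely when $\exp(-\text{ad}_w)(f'(V)) \subset U$, i.e. when $f'(V) \subset \exp(\text{ad}_w)(U)$, the directions match. As the preceding lemma is assumed proven, this assembling of pieces is the whole argument.
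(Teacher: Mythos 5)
Your proposal is correct and follows exactly the paper's (implicit) argument: Theorem \ref{thm:morphsubalgconcr} is obtained by invoking Theorem \ref{thm:morphsubalgmap} and translating its hypothesis, conclusion, and parameterizing space through the two points of the preceding lemma. Your explicit check that the sign convention $f^w = \exp(-\mathrm{ad}_w)\circ f$ is consistent with the equivalence $(f')^w \in \mf h^1_U \iff f'(V)\subset \exp(\mathrm{ad}_w)(U)$ is a worthwhile detail the paper leaves to the reader.
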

For the rest of this section, let $W = \End(C)$ for some vector space $C$. By picking different subalgebras of $\End(C)$, we will give criteria for the action to preserve a geometric structure up to equivalence. 
\begin{exmp}\label{ex:geostr} Let $\rho:(V,\mu)\to \End(C)$ be a Lie algebra morphism, i.e. a representation of $(V,\mu)$. Assume that $f(V)\subset U$, where $U\subset W$ is a Lie subalgebra.
\begin{itemize}
    \item [-] Let $g \in S^2(C^\ast)$ be a non-degenerate symmetric bilinear form. Taking $$U = \mf {so}(C,g) = \{A \in \End(C) \mid \forall x,y \in C: g(Ax,y) + g(x,Ay) = 0\},$$
    Theorem \ref{thm:morphsubalgconcr} implies that if 
    $$
    H^1_{CE}(V, W/U) = H^1_{CE}(V,S^2(C^\ast)) =  0, 
    $$
    where $(V,\mu)$ acts on $S^2(C^\ast)$ by the natural extension, then for any representation $\rho'$ near $\rho$, there exists an $A \in \End(V)$, such that $\rho'(V) \subset \exp(A)\mf{so}(C,g)\exp(-A) = \mf {so}(C,\exp(A)\cdot g)$, where $(\exp(A)\cdot g)(x,y) = g(\exp(-A)x,\exp(-A)y)$.
    \item[-] Let $\omega \in \wedge^2 C^\ast$ be a non-degenerate skew-symmetric bilinear form. Taking $$ U= \mf {sp}(C,\omega) = \{A \in \End(C) \mid \forall x,y \in C: \omega(Ax,y) + \omega(x,Ay) = 0\},$$
    Theorem \ref{thm:morphsubalgconcr} implies that if 
    $$
    H^1_{CE}(V,W/U) = H^1_{CE}(V,\wedge^2 C^\ast) = 0,
    $$
    where $(V,\mu)$ acts on $\wedge^2 C^\ast$ by the natural extension, then for any representation $\rho'$ near $\rho$, there exists an $A \in \End(V)$, such that $\rho'(V) \subset \exp(A)\mf{sp}(C,\omega)\exp(-A) = \mf{sp}(C,\exp(A)\cdot \omega). $
\end{itemize}
    
\end{exmp}
\footnotesize
\bibliography{bib}
\bibliographystyle{alpha}
\Addresses
\end{document}